\newcommand{\bel}[1]{\begin{equation}\label{#1}}
\newcommand{\be}{\begin{equation}}
\newcommand{\ba}{\begin{eqnarray}}
\newcommand{\ea}{\end{eqnarray}}
\newcommand{\qe}{\end{equation}}
\newcommand{\R}{{\mathbb R}}
\newcommand{\C}{{\mathbb C}}
\newcommand{\Q}{{\mathbb Q}}
\newcommand{\Hmm}[1]{\leavevmode{\marginpar{\tiny%
$\hbox to 0mm{\hspace*{-0.5mm}$\leftarrow$\hss}%
\vcenter{\vrule depth 0.1mm height 0.1mm width \the\marginparwidth}%
\hbox to
0mm{\hss$\rightarrow$\hspace*{-0.5mm}}$\\\relax\raggedright #1}}}
\theoremstyle{plain}
\newtheorem{thm}{Theorem}[section]
\newtheorem{prop}[thm]{Proposition}
\newtheorem{coro}[thm]{Corollary}
\newtheorem{lemma}[thm]{Lemma}
\newtheorem*{AbundCon}{Good Minimal Models Conjecture}
\newtheorem*{NonvanCon}{Nonvanishing Conjecture}
\theoremstyle{definition}
\newtheorem{defi}[thm]{Definition}
\begin{document}

\title[On Nonvanishing for uniruled log canonical pairs]{On Nonvanishing for\\ uniruled log canonical pairs}

	\author{Vladimir Lazi\'c}
	\address{Fachrichtung Mathematik, Campus, Geb\"aude E2.4, Universit\"at des Saarlandes, 66123 Saarbr\"ucken, Germany}
	\email{lazic@math.uni-sb.de}
	
	\author{Fanjun Meng}
	\address{Department of Mathematics, Northwestern University, 2033 Sheridan Road, Evanston, IL 60208, USA}
	\email{fanjunmeng2022@u.northwestern.edu}

	\thanks{
		Lazi\'c was supported by the DFG-Emmy-Noether-Nachwuchsgruppe ``Gute Strukturen in der h\"oherdimensionalen birationalen Geometrie". Meng would like to express his sincere gratitude to his advisor Mihnea Popa for proposing related problems, illuminating him and selflessly supporting him. We would like to thank Mircea Musta{\c{t}}{\u{a}}, Nikolaos Tsakanikas, Charles Stibitz and Yuan Wang for helpful discussions and for suggesting references, and the referees for valuable comments.
		\newline
		\indent 2020 \emph{Mathematics Subject Classification}: 14E30.\newline
		\indent \emph{Keywords}: good minimal models, Minimal Model Program.
	}

\begin{abstract}
We prove the Nonvanishing conjecture for uniruled projective log canonical pairs of dimension $n$, assuming the Nonvanishing conjecture for smooth projective varieties in dimension $n-1$. We also show that the existence of good minimal models for non-uniruled projective klt pairs in dimension $n$ implies the existence of good minimal models for projective log canonical pairs in dimension $n$.
\end{abstract}

\maketitle
	\setcounter{tocdepth}{1}
	\tableofcontents

\section{Introduction}

In this paper we prove several results related to the Nonvanishing conjecture and the existence of good minimal models for projective log canonical pairs over the complex numbers. 

\begin{AbundCon}
Let $(X,\Delta)$ be a projective log canonical pair. If $K_X+\Delta$ is pseudoeffective, then $(X,\Delta)$ has a good minimal model.
\end{AbundCon}

	The conjecture is known to hold in full generality only in dimensions at most $3$. In dimension $3$, it is a culmination of work of many authors, ending in \cite{KMM94,Sho96}. Its important part is:

\begin{NonvanCon}
Let $(X,\Delta)$ be a projective log canonical pair. If $K_X+\Delta$ is pseudoeffective, then there exists an $\R$-divisor $D\geq0$ such that $K_X+\Delta\sim_\R D$.
\end{NonvanCon}

A complementing conjecture is the \emph{Abundance conjecture}, which predicts that for a log canonical pair, any of its minimal models is good, see Section \ref{sec:prelim}.

\medskip

We say that a pair $(X,\Delta)$ is \emph{uniruled} if $X$ is uniruled; and similarly for \emph{non-uniruled} pairs. Our main result proves the Nonvanishing conjecture for uniruled pairs, assuming the Nonvanishing conjecture in lower dimensions.

\begin{thm}\label{main}
The Nonvanishing conjecture for smooth projective varieties in dimension $n-1$ implies the Nonvanishing conjecture for uniruled projective log canonical pairs in dimension $n$.
\end{thm}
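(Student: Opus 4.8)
The plan is to induct on $n$ and to reduce, via the maximal rationally connected (MRC) fibration of $X$, to Nonvanishing on its general fibre — which is rationally connected of smaller dimension — and on its base — which is not uniruled. On the base one applies the hypothesis on smooth varieties, together with the theorem of Boucksom, Demailly, P\u{a}un and Peternell that a non-uniruled smooth projective variety has pseudoeffective canonical class; on the fibre one applies the inductive hypothesis. A useful preliminary remark is that Nonvanishing for smooth projective varieties in dimension $n-1$ formally implies the same in every dimension $m\le n-1$: given a smooth projective $W$ with $K_W$ pseudoeffective, apply the hypothesis to $W\times A$ for an abelian variety $A$ of dimension $n-1-m$, so that $K_{W\times A}=\mathrm{pr}_W^{*}K_W$ is pseudoeffective, and restrict the resulting effective $\R$-divisor to a fibre $W\times\{a\}$. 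Combined with the theorem in lower dimensions, this lets us use Nonvanishing for smooth varieties, and for uniruled log canonical pairs, in all dimensions $<n$.

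\textbf{Reductions.} Passing to a log resolution and replacing $\Delta$ by the sum of its strict transform and the reduced exceptional divisor (so that Nonvanishing for the new pair implies it for the original, by pushing forward an effective divisor), we may assume $(X,\Delta)$ is log smooth. Since $X$ is uniruled, $K_X$ is not pseudoeffective, so there is a smallest $t\in(0,1]$ with $K_X+t\Delta$ pseudoeffective; as $K_X+t\Delta\sim_\R D\ge 0$ implies $K_X+\Delta\sim_\R D+(1-t)\Delta\ge 0$, we may replace $\Delta$ by $t\Delta$, after which $K_X+\Delta$ lies on the boundary of the pseudoeffective cone — in particular it is not big — and $(X,\Delta)$ is klt when $t<1$. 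If moreover $\nu(K_X+\Delta)=0$, Nonvanishing holds by the known results on divisors of numerical dimension zero; so we may assume $0<\nu(K_X+\Delta)\le n-1$.

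\textbf{The MRC fibration and gluing.} After a further birational modification I may assume the MRC fibration is a morphism $f\colon X\to Y$ onto a smooth variety; its general fibre $F$ is rationally connected and $Y$ is not uniruled, so $K_Y$ is pseudoeffective and hence $K_Y\sim_\R\ge 0$ by the first paragraph. First suppose $\dim F<n$. The restriction $(K_X+\Delta)|_F=K_F+\Delta|_F$ is pseudoeffective — being the restriction of a pseudoeffective class to a general fibre — and $(F,\Delta|_F)$ is a log smooth uniruled pair of dimension $<n$; by induction $K_F+\Delta|_F\sim_\R\ge 0$, so $\kappa\big(F,(K_X+\Delta)|_F\big)\ge0$. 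Replacing $X$ by a model on which the relative Iitaka fibration of $K_X+\Delta$ over $Y$ is a morphism and subtracting from $K_X+\Delta$ an effective divisor spread out from the relative fixed part over $Y$, we reduce to the case that $f\colon(X,\Delta)\to Y$ is an lc-trivial fibration. The canonical bundle formula of Ambro and Fujino--Mori then gives $K_X+\Delta\sim_\R f^{*}(K_Y+B_Y+M_Y)$, modulo the effective divisor just removed, with discriminant $B_Y\ge 0$ and moduli part $M_Y$ pseudoeffective (and nef after a modification of $Y$). Since $K_Y\sim_\R\ge 0$ and $B_Y\ge 0$, the generalized pair $(Y,B_Y+M_Y)$ has $K_Y+B_Y+M_Y$ pseudoeffective in dimension $<n$, and Nonvanishing for it — in the form for generalized pairs, using the lower-dimensional hypothesis to absorb the moduli part — yields $K_Y+B_Y+M_Y\sim_\R\ge 0$, whence $K_X+\Delta\sim_\R\ge 0$. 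It remains to treat $\dim F=n$, i.e.\ $X$ rationally connected: here the MRC fibration is trivial, and one instead runs a minimal model program compatible with $(X,\Delta)$ to obtain a fibration $g\colon X\to T$ of relative Picard number one with $\dim T<n$; the rationally connected general fibre is handled by induction and the rationally connected base $T$ as above, while if $\dim T=0$ then $X$ has Picard number one, so $K_X+\Delta$ is numerically a nonnegative multiple of an ample class and is therefore either big, hence effective, or numerically trivial, hence $\R$-linearly trivial since rationally connected varieties have torsion-free Picard group and vanishing $\mathrm{Pic}^{0}$.

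\textbf{Main obstacle.} The heart of the matter is the gluing over the MRC base: carrying the reduction to an lc-trivial fibration through the successive birational modifications while controlling effectivity and the discriminant, and above all handling the pseudoeffective moduli part $M_Y$. Because the base $Y$ of the MRC fibration is never uniruled, the theorem itself does not apply to $(Y,B_Y+M_Y)$, so one must feed the lower-dimensional Nonvanishing for smooth varieties into a Nonvanishing statement for this generalized pair — including the delicate subcase in which $(K_X+\Delta)|_F$ is already big on the general fibre $F$, where the relative Iitaka fibration degenerates. A secondary difficulty, in the rationally connected case, is to run a minimal model program that terminates in a Mori fibre space while staying compatible with $\Delta$ (a program for $K_X$ alone need not preserve the discrepancies of $(X,\Delta)$), which is precisely why the boundary-threshold reduction, making $\Delta$ as small as possible, is performed first.
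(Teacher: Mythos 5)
Your overall strategy (reduce to the pseudoeffective threshold, fibre $X$ over a lower-dimensional base, apply Nonvanishing downstairs) is the right one, and several of your preliminary reductions (the product with an abelian variety to lower the dimension of the hypothesis, the numerical-dimension-zero case, the Picard-number-one case) are sound. But the step you yourself flag as the ``main obstacle'' is a genuine gap, not a technicality. After passing to an lc-trivial fibration over the MRC base $Y$, the canonical bundle formula produces a \emph{generalized} pair $(Y,B_Y+M_Y)$ whose moduli part $M_Y$ is only pseudoeffective (nef after a base change), and Nonvanishing for generalized pairs in dimension $n-1$ is \emph{not} a formal consequence of Nonvanishing for smooth varieties in dimension $n-1$ --- there is no way to ``absorb'' a nef b-divisor into an effective boundary without already knowing something like semiampleness of $M_Y$, which is open. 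The paper's proof is engineered precisely to avoid this: one first runs a $(K_X+(1-\delta)\Delta)$-MMP (using \cite[Lemma 2.19, Theorem 4.4]{LT19} and the threshold reduction) that terminates in a Mori fibre space $g\colon X\to T$ with $K_X+\Delta\equiv_T 0$. For such a klt-trivial fibration, Ambro's theorem \cite[Theorem 0.2]{Amb05} together with \cite[Theorem 3.1]{FG12} yields an honest \emph{klt} pair $(T,\Delta_T)$ with $K_X+\Delta\sim_\R g^*(K_T+\Delta_T)$ --- the moduli part is absorbed into a klt boundary and never appears. Your MRC fibration does not have $K_X+\Delta$ relatively numerically trivial, so you are forced through the relative Iitaka fibration and the generalized-pair formalism, and at that point the hypothesis is no longer strong enough to conclude. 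Replacing the MRC fibration by the Mori fibre space produced by this specific MMP is the key idea you are missing.

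A second gap: you never treat the genuinely log canonical case. After the threshold reduction, if $\tau=1$ the pair can still satisfy $\lfloor\Delta\rfloor\neq 0$, and scaling $\Delta$ does not make it klt. The paper handles this (Steps 5--8) by a second threshold in the direction of $\lfloor\Delta\rfloor$ only, and, when that threshold equals $1$, by observing that $\lfloor\Delta\rfloor$ dominates $T$, passing to a dlt model, and using adjunction $(K_Y+\Delta_Y)|_S\sim_\R K_S+\Delta_S$ to a component $S$ of $\lfloor\Delta_Y\rfloor$ dominating $T$. This requires Nonvanishing for \emph{arbitrary} (in particular non-uniruled) log canonical pairs in dimension $n-1$, which your induction does not supply: your inductive hypothesis only covers uniruled pairs in lower dimension, and the hypothesis of the theorem only covers smooth varieties. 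The missing ingredient is Hashizume's theorem (\cite[Theorem 1.4]{Has18}, Theorem \ref{HasR} in the paper), which upgrades Nonvanishing for smooth varieties in dimension $n-1$ to Nonvanishing for all log canonical pairs in dimensions at most $n-1$, and also provides the existence of minimal models in lower dimensions needed to run the MMP in the first place.
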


The proof is in Section \ref{sec:nonvan}, by using the techniques from \cite{LT19}. This result improves considerably on the previous related results in the literature. If $(X,\Delta)$ is \emph{klt} and $\Delta$ is \emph{rational}, then an analogous statement was proved in \cite[Theorem 8.8]{DHP}, using however a much stronger assumption -- the existence of good minimal models in dimension $n-1$.

An immediate corollary is:

\begin{coro}\label{uni4}
Let $(X,\Delta)$ be a uniruled projective log canonical pair of dimension $4$. If $K_X+\Delta$ is pseudoeffective, then there exists an $\R$-divisor $D\geq0$ such that $K_X+\Delta\sim_\R D$.
\end{coro}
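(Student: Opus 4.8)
The plan is to deduce this immediately from Theorem \ref{main} by taking $n=4$. The hypothesis needed is then the Nonvanishing conjecture for smooth projective varieties of dimension $3$, and this is classical: for a smooth projective threefold $X$ with $K_X$ pseudoeffective one runs a $K_X$-MMP to a minimal model $X'$ with $K_{X'}$ nef, the Abundance theorem for threefolds (\cite{KMM94,Sho96}, after Miyaoka and Kawamata) shows that $K_{X'}$ is semiample and in particular has an effective multiple, and since plurigenera are birational invariants among varieties with canonical singularities it follows that $K_X\sim_\R D$ for some $\R$-divisor $D\geq 0$. In fact the Nonvanishing conjecture is known in dimension $3$ even for arbitrary log canonical pairs, so the three-dimensional input is available with plenty of room to spare.

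Feeding this input into Theorem \ref{main} yields the Nonvanishing conjecture for uniruled projective log canonical pairs in dimension $4$, which is exactly the assertion of Corollary \ref{uni4}. I do not anticipate any genuine obstacle here: the whole substance resides in the proof of Theorem \ref{main}, and this statement is a formal specialization of it. The only point requiring a little care is to match the hypotheses precisely---Theorem \ref{main} requires Nonvanishing only for \emph{smooth} projective varieties in dimension $n-1$, so it is enough to invoke the unconditional three-dimensional case rather than any stronger (log canonical or good minimal model) statement, which keeps the corollary genuinely unconditional in dimension $4$.
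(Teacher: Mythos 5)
Your proposal is correct and follows the paper's own route: both deduce the corollary by specializing Theorem \ref{main} to $n=4$ and invoking the known three-dimensional Nonvanishing. The only cosmetic difference is that the paper cites Miyaoka's nonvanishing theorem for terminal threefolds directly, whereas you derive the dimension-$3$ input from the MMP plus Abundance for threefolds, which is a logically heavier (though valid) justification of the same classical fact.
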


	Another corollary of Theorem \ref{main} is the equivalence between the Nonvanishing conjecture in dimension $n-1$ and the Nonvanishing conjecture for uniruled log canonical pairs in dimension $n$. This is Corollary \ref{equi} below.

\medskip

Our second goal is to discuss relationships between the existence of good minimal models for several classes of pairs. It is known that the termination of flips for klt pairs implies the termination of flips for log canonical pairs, see for instance \cite{Bir07,Fuj07a}. Similarly, the Nonvanishing for klt pairs (even for smooth varieties) implies the Nonvanishing for log canonical pairs, see \cite{Has18} and Theorem \ref{main} above; and the existence of minimal models for klt pairs (even for smooth varieties) implies the existence of minimal models for log canonical pairs \cite{LT19}. Regarding Abundance, statements in a similar direction were obtained in \cite{KMM94,HX16}.

In this context, the following is a very satisfactory result regarding the existence of good minimal models. Note that we say that a pair $(X,\Delta)$ has a \emph{rational boundary} if the coefficients of $\Delta$ are rational numbers and $K_X+\Delta$ is $\Q$-Cartier. 

\begin{thm}\label{main2}
The existence of good minimal models for projective non-uniruled klt pairs with rational boundaries in dimension $n$ implies the existence of good minimal models for projective log canonical pairs in dimension $n$. 
\end{thm}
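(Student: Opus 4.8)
The plan is to reduce the statement for log canonical pairs step by step to the non-uniruled klt case, using the two main reduction techniques already available in the literature: the passage from log canonical to klt via dlt blow-ups and perturbation of coefficients (as in \cite{LT19}), and the passage from uniruled to non-uniruled via the canonical bundle formula applied to a fibration onto the image of the MRC quotient. First I would take a projective log canonical pair $(X,\Delta)$ with $K_X+\Delta$ pseudoeffective, since otherwise there is nothing to prove. If $(X,\Delta)$ is uniruled, then by Theorem \ref{main} (whose hypothesis, the Nonvanishing conjecture for smooth varieties in dimension $n-1$, follows from the existence of good minimal models for non-uniruled klt pairs in that dimension after passing to a resolution) we get $K_X+\Delta\sim_\R D\geq 0$; more is needed, though, namely a good minimal model, so I would instead argue directly along the lines of \cite[proof of Theorem]{LT19}: run a suitable MMP and use that the problem descends to the general fiber and the base of an Iitaka-type fibration, both of dimension $<n$ or non-uniruled.

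The key steps, in order, are as follows. Step 1: reduce to klt pairs with rational boundary. Given a log canonical pair $(X,\Delta)$, take a dlt blow-up and then, using that the relevant cones are rational polyhedral and that good minimal models are preserved under small perturbations of the boundary (the argument of \cite{LT19} and \cite{Has18}), reduce the existence of a good minimal model for $(X,\Delta)$ to the same statement for klt pairs $(X',\Delta')$ with $\Delta'$ rational and $K_{X'}+\Delta'$ pseudoeffective. Step 2: dispose of the uniruled klt case. If $(X',\Delta')$ is uniruled, consider the MRC fibration; after birational modification we may assume it is a morphism $f\colon X'\to Z$ with $\dim Z<n$ and general fiber rationally connected. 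Running a $(K_{X'}+\Delta')$-MMP over $Z$ terminates with a good minimal model over $Z$ because on the rationally connected general fiber $K_{X'}+\Delta'$ restricts to a pseudoeffective (hence, by Nonvanishing in lower dimension which we have from the hypothesis via resolution, effective) divisor and relative abundance in dimension $<n$ applies; then the canonical bundle formula produces a klt pair $(Z,\Delta_Z)$ with $K_Z+\Delta_Z$ pseudoeffective and $\dim Z<n$, so by induction on dimension $(Z,\Delta_Z)$ has a good minimal model, which lifts back. Step 3: the non-uniruled klt case is exactly the hypothesis of the theorem, once we observe that a good minimal model for a klt pair with rational boundary exists iff it exists after making $K+\Delta$ $\Q$-Cartier, which is automatic for klt pairs. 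Assembling Steps 1--3 and the induction on $n$ gives the conclusion.

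The main obstacle I expect is Step 2, and more precisely two subtleties there: first, ensuring that the output of the canonical bundle formula is again a \emph{klt} pair with \emph{rational} boundary on $Z$ (one must control the moduli and discriminant parts; the discriminant part is rational since $\Delta'$ is, and semiampleness issues for the moduli part are avoided because we only need pseudoeffectivity of $K_Z+\Delta_Z$ plus the existence of a good minimal model downstairs, not positivity upstairs); and second, descending a good minimal model along $f$ — i.e. checking that a good minimal model of $(Z,\Delta_Z)$ together with the relative good minimal model over $Z$ glues to a good minimal model of $(X',\Delta')$. This last gluing is where one invokes the standard argument that if $g\colon X'\dashrightarrow X''$ is the relative MMP over $Z$ and $(Z,\Delta_Z)$ has a good minimal model, then $K_{X''}+\Delta''$ is the pullback of a semiample divisor, hence semiample; abundance-type results of the form used in \cite{HX16} and \cite{DHP} make this precise. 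A secondary technical point is making the perturbation argument of Step 1 fully rigorous in the log canonical setting, but this is by now routine given \cite{LT19} and \cite{Has18}, so I would treat it briefly and refer to those sources.
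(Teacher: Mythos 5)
Your overall architecture (reduce log canonical to klt by perturbing the boundary, then reduce uniruled klt to the non-uniruled case via a fibration) differs from the paper's, which instead reduces everything to the single equality $\kappa_\iota(X,K_X+\Delta)=\nu(X,K_X+\Delta)$ and then invokes Lemma \ref{HasHu} (built on \cite{HH19}) to convert that equality into a good minimal model. More importantly, there are genuine gaps. First, your Step 1 (log canonical to klt by ``small perturbations of the boundary'') breaks down exactly in the hard case: if $K_X+\Delta-\varepsilon\lfloor\Delta\rfloor$ is not pseudoeffective for any $\varepsilon>0$, then no klt perturbation of the boundary keeps the adjoint divisor pseudoeffective, and the reduction simply does not apply. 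This case is the content of Proposition \ref{lem:tau} and is a substantive argument: one produces a Mori fibre space $g\colon X\to T$ with $K_X+\Delta\equiv_T 0$, observes that $\lfloor\Delta\rfloor$ must dominate $T$, and uses adjunction to a component $S$ of $\lfloor\Delta\rfloor$ together with abundance in dimension $n-1$ on $S$. Nothing in your proposal addresses it. Even when a pseudoeffective klt perturbation does exist, a good minimal model for $\big(X,\Delta-\varepsilon\lfloor\Delta\rfloor\big)$ does not ``by preservation under perturbation'' yield one for $(X,\Delta)$: what the paper actually does is write $K_X+\Delta-\varepsilon\lfloor\Delta\rfloor\sim_\R D\geq0$, transfer the equality $\kappa_\iota=\nu$ from $D+\delta\lfloor\Delta\rfloor$ to $D+\varepsilon\lfloor\Delta\rfloor\sim_\R K_X+\Delta$ via \cite[Lemma 2.9]{DL15}, and only then produce a good minimal model via Lemma \ref{HasHu}.

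Second, your Step 2 founders on the difficulty you yourself flag. The canonical bundle formula of Ambro and Fujino--Gongyo produces a klt pair on the base only when $K_{X'}+\Delta'$ is relatively trivial over that base; over the MRC base $Z$ this is not the case, so you would first have to run a relative MMP over $Z$, prove relative termination and relative semiampleness (which is not simply ``abundance in dimension $<n$'' applied to the fibres), and then replace $Z$ by the relative Iitaka fibration $W$, which may be uniruled and of any dimension between $\dim Z$ and $n$. Moreover, ``lifting back'' a good minimal model from the base is not a formal gluing: what transfers cleanly along $K_{X'}+\Delta'\sim_\R f^*(K_W+\Delta_W)$ is the equality $\kappa_\iota=\nu$, after which one still needs a statement like Lemma \ref{HasHu} to conclude. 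The paper avoids all of this in the klt case by quoting \cite[Theorem A]{LT19} (existence of minimal models, reduced to smooth non-uniruled varieties, hence to the hypothesis), the decomposition of \cite[Proposition 3.2(3)]{Bir11} into rational nef klt adjoint divisors, and \cite[Theorem 1.1]{DL15} for their semiampleness, with no fibration argument at all.
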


Working with log canonical pairs is often much more difficult than working with klt pairs, which explains the significance of Theorem \ref{main2}. Additionally, techniques which work for non-uniruled pairs often fail on uniruled pairs, see \cite{LP18,LP20b}. Theorem \ref{main2} improves considerably on \cite[Theorem 1.2]{DL15}, by both removing an assumption and improving the conclusion.

There is an analogous statement in the context of the Abundance conjecture, see Theorem \ref{gm2}, which generalizes \cite[Theorem 1.1]{DL15} to log ca\-no\-ni\-cal pairs.

Most of the results in this paper can be generalised to the relative setting without difficulties. For instance, the relative version of Theorem \ref{main} follows from Theorem \ref{main} and \cite[Lemma 3.2.1]{BCHM}, and the relative version of Theorem \ref{main2} follows from Theorem \ref{main2} and \cite[Theorems 1.2 and 1.7]{HH19}.

\section{Preliminaries}\label{sec:prelim}

Throughout the paper we work over $\C$ and all varieties are normal and projective. A \emph{fibration} is a projective surjective morphism with connected fibres. A \emph{birational contraction} is a birational map whose inverse does not contract any divisors.

\medskip

Given a normal projective variety $ X $ and a pseudoeffective $\R$-Cartier $\R$-divisor $ D $ on $X$, we denote by $\kappa_\iota(X,D)$ the \emph{invariant Iitaka dimension} of $D$, see \cite{Cho08}; if the divisor $D$ is rational, we denote its Iitaka dimension by $\kappa(X,D)$. We denote by $ \nu(X,D) $ the \emph{numerical dimension} of $ D $, see \cite[Chapter V]{Nak04}, \cite{Kaw85}; note that the numerical dimension that we use in this paper was denoted by $\kappa_\sigma$ in \cite{Nak04}. 

We use frequently and without explicit mention that the invariant Iitaka dimension and the numerical dimension behave well under proper pullbacks: if $D$ is an $\R$-Cartier $\R$-divisor on a normal variety $X$, and if $f\colon Y\to X$ is a proper surjective morphism from a normal variety $Y$, then 
$$\kappa_\iota(X,D)=\kappa_\iota(Y,f^*D)\quad\text{and}\quad\nu(X,D)=\nu(Y,f^*D);$$ 
and if, moreover, $f$ is birational and $E$ is an effective $f$-exceptional divisor on $Y$, then
$$\kappa_\iota(X,D)=\kappa_\iota(Y,f^*D+E)\quad\text{and}\quad\nu(X,D)=\nu(Y,f^*D+E).$$

Given a smooth projective variety $ X $ and a pseudoeffective $\R$-divisor $ D $ on $X$, we denote by $P_\sigma(D)$ and $N_\sigma(D)$ the $\R$-divisors forming the \emph{Nakayama--Zariski decomposition} of $D$, see\ \cite[Chapter III]{Nak04}. 

\medskip

For the definitions and basic results on the singularities of pairs and the Minimal Model Program (MMP) we refer to \cite{KM98}. Unless otherwise stated, in a pair $(X,\Delta)$ the boundary $\Delta$ always has real coefficients. A pair $(X,\Delta)$ is \emph{log smooth} if $X$ is smooth and $\Delta$ has simple normal crossings support.

We need the following useful definition.

\begin{defi}
Let $(X, \Delta)$ be a klt pair and $G$ an effective $\mathbb{R}$-Cartier $\mathbb{R}$-divisor such that $K_X+\Delta+G$ is pseudoeffective. Then the \emph{pseudoeffective threshold} of $(X, \Delta)$ with respect to $G$ is 
$$\tau(X, \Delta; G):=\min\{t\in \mathbb{R}\mid K_X+\Delta+tG\text{ is pseudoeffective}\}.$$
\end{defi}

We distinguish between two types of good minimal models: good minimal models in the usual sense and good minimal models in the sense of Birkar--Hashizume. In this paper, the phrase \emph{good minimal model} always means a good minimal model in the usual sense. 

\begin{defi} 
	Let $(X,\Delta)$ be a log canonical pair. A birational map $ \varphi \colon (X,\Delta) \dashrightarrow (Y,\Delta_Y)$ to a $\Q$-factorial log canonical pair $ (Y,\Delta_Y) $ is a \emph{good minimal model in the sense of Birkar--Hashizume} of the pair $(X,\Delta)$ if $ \Delta_Y =\varphi_*\Delta+E$, where $E$ is the sum of all prime divisors which are contracted by $\varphi^{-1}$, if the divisor $K_Y+\Delta_Y$ is semiample and if
	$$a(F,X,\Delta) < a(F,Y,\Delta_Y)$$
	for any prime divisor $ F $ on $ X $ which is contracted by $\varphi $.
		
	If, moreover, the map $\varphi$ is a birational contraction, but $Y$ is not necessarily $\Q$-factorial, then $\varphi$ is a \emph{good minimal model} of $(X,\Delta)$.
\end{defi}

The definitions of \emph{minimal models} and \emph{minimal models in the sense of Birkar--Shokurov} are similar; see for instance \cite[\S2.2]{LT19} for a discussion of their differences.

The following results are used often in the rest of the paper.

\begin{lemma}\label{lem:ratreal}
The existence of good minimal models for klt pairs (respectively non-uniruled klt pairs) with rational boundaries in dimension $n$ implies the exis\-ten\-ce of good minimal models for klt pairs (respectively non-uniruled klt pairs) in dimension $n$. 
\end{lemma}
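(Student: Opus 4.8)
The plan is to reduce the real-coefficient case to the rational case by a standard perturbation argument, writing $K_X+\Delta$ as a convex combination of $\Q$-divisors $K_X+\Delta_i$ to which the hypothesis applies, and then to show that a good minimal model for the $\Delta_i$ produces one for $\Delta$. First I would fix a klt pair $(X,\Delta)$ of dimension $n$ with $K_X+\Delta$ pseudoeffective (if it is not pseudoeffective there is nothing to prove, since then $(X,\Delta)$ has a Mori fibre space and in particular the relevant conclusion is vacuous or handled separately). Since the klt condition is open, I can choose finitely many $\Q$-divisors $\Delta_1,\dots,\Delta_k\geq 0$ with $(X,\Delta_i)$ klt and rational positive numbers $r_i$ with $\sum r_i=1$ such that $\Delta=\sum r_i\Delta_i$; moreover the $\Delta_i$ can be taken in a small neighbourhood of $\Delta$ in the rational affine subspace through $\Delta$, so that $K_X+\Delta_i$ is pseudoeffective for every $i$ (here one uses that the pseudoeffective cone is closed and that $K_X+\Delta$ lies in its interior relative to this affine slice, or simply that one may move slightly and stay pseudoeffective after also perturbing so that $K_X+\Delta$ is in the interior of the cone spanned by the $K_X+\Delta_i$). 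In the non-uniruled case, uniruledness depends only on $X$, so $(X,\Delta_i)$ is again non-uniruled, and the hypothesis applies verbatim.

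Next I would run a $(K_X+\Delta_1)$-MMP with scaling of an ample divisor. By the rational case of the hypothesis, $(X,\Delta_1)$ has a good minimal model, and by \cite{BCHM}-type results this MMP terminates with a good minimal model $\varphi\colon X\dashrightarrow Y$ of $(X,\Delta_1)$; the key point is that every step of this MMP is also a step of the $(K_X+\Delta)$-MMP, because $\Delta-\Delta_1$ is a small perturbation and negativity of $K_X+\Delta_1$ on an extremal ray forces negativity (or at worst triviality) of $K_X+\Delta$ on the same ray once the $\Delta_i$ are chosen close enough to $\Delta$. Tracking all the boundaries $\Delta_i$ simultaneously, after this MMP each $K_Y+\varphi_*\Delta_i$ is nef, hence so is $K_Y+\varphi_*\Delta=\sum r_i(K_Y+\varphi_*\Delta_i)$, so $\varphi$ is a minimal model of $(X,\Delta)$. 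To upgrade to a \emph{good} minimal model I would invoke abundance for the $(X,\Delta_i)$: each $K_Y+\varphi_*\Delta_i$ is semiample, and a nonnegative rational combination of semiample divisors is semiample, so $K_Y+\varphi_*\Delta$ is semiample, as required.

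The main obstacle, and the step that needs the most care, is ensuring that the MMP for one rational perturbation $(X,\Delta_1)$ is compatible with the others and with $\Delta$ itself — i.e. that a single sequence of flips and divisorial contractions serves all of them and terminates. This is handled by the usual "MMP with scaling in a neighbourhood of a divisor" machinery: one chooses the $\Delta_i$ inside a sufficiently small rational polytope $P$ around $\Delta$ in the space of boundaries, runs the MMP for a suitable divisor with scaling, and uses that the existence of a good minimal model for one interior point of $P$ (together with $\Q$-factoriality and the cone/contraction theorems for klt pairs) guarantees termination of the MMP for all of $P$ with a common output. One then concludes that the resulting $Y$, with each $K_Y+\varphi_*\Delta_i$ semiample, gives a good minimal model of $(X,\Delta)$; since the construction never enlarges the class of varieties involved, the non-uniruled case follows along identical lines.
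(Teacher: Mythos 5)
There is a genuine gap at the very first step: you cannot in general write $\Delta=\sum r_i\Delta_i$ with each $\Delta_i$ rational and each $K_X+\Delta_i$ pseudoeffective. This would require $K_X+\Delta$ to lie in the relative interior of the pseudoeffective region inside the rational polytope of boundaries you work in, and that region is not known to be a rational polytope (its boundary is governed by pseudoeffective thresholds, whose rationality is open in general). If $\Delta$ sits exactly on that boundary --- e.g.\ if $\Delta=\tau D$ for a prime divisor $D$ with $\tau=\tau(X,0;D)$ irrational --- then every convex combination of rational boundaries averaging to $\Delta$ must use some $\Delta_i$ with $K_X+\Delta_i$ not pseudoeffective, and your hypothesis says nothing about such pairs. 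The paper's proof avoids exactly this by reversing the order of operations: it first produces a minimal model $(X',\Delta')$ of the \emph{real} pair directly, via \cite[Theorem A]{LT19} (whose input, the existence of minimal models for smooth varieties in dimension $n$, is covered by the rational-boundary hypothesis, together with \cite[Corollary 0.3]{BDPP} to see that the relevant smooth varieties are non-uniruled in the second case), and only then decomposes the \emph{nef} divisor $K_{X'}+\Delta'$ into a positive combination of rational nef klt divisors $K_{X'}+\Delta_i$ using \cite[Proposition 3.2(3)]{Bir11}. That decomposition is always possible because the nef locus in a Shokurov polytope of boundaries is a rational polytope; the analogous statement for the pseudoeffective locus, which your argument needs, is not available.

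A second, smaller issue: even granting your decomposition, terminating a $(K_X+\Delta_1)$-MMP only makes $K_Y+\varphi_*\Delta_1$ nef; the assertion that ``tracking all the boundaries simultaneously'' the same run makes every $K_Y+\varphi_*\Delta_i$ nef is unjustified, since the contracted rays are chosen to be $(K_X+\Delta_1)$-negative and nearby boundaries can remain non-nef at the end. In the paper's argument no such simultaneity is needed, because the decomposition is performed only after a single MMP for $K_X+\Delta$ itself has terminated. Your final step --- a positive combination of semiample divisors is semiample, and a nef klt pair with rational boundary admitting a good minimal model has semiample log canonical divisor --- is fine and agrees with how the paper concludes.
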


\begin{proof}
Let $(X,\Delta)$ be a klt pair of dimension $n$. Then it has a minimal model $(X',\Delta')$ by \cite[Corollary 0.3]{BDPP} and \cite[Theorem A]{LT19}. By passing to a small $\Q$-factorialization \cite[Theorem 10.5]{Fuj11}, we may assume that $X'$ is $\Q$-factorial. By \cite[Proposition 3.2(3)]{Bir11} there exist finitely many $\Q$-divisors $\Delta_i$ and positive real numbers $r_i$ such that each pair $(X',\Delta_i)$ is klt, each $K_{X'}+\Delta_i$ is nef and $K_{X'}+\Delta'=\sum r_i(K_{X'}+\Delta_i)$. By assumption, each $K_{X'}+\Delta_i$ is semiample, hence $K_{X'}+\Delta'$ is semiample.
\end{proof}

\begin{lemma}\label{ind}
The existence of good minimal models for klt pairs (respectively non-uniruled klt pairs) in dimension $n$ implies the existence of good minimal models for klt pairs (respectively non-uniruled klt pairs) in dimensions at most $n$.
\end{lemma}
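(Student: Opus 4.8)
The plan is to bootstrap from dimension $n$ down to lower dimensions by taking products with an abelian variety. Let $(X,\Delta)$ be a klt pair with $\dim X=d\le n$ and $K_X+\Delta$ pseudoeffective; if $d=n$ there is nothing to prove, so assume $d<n$. Choose an abelian variety $A$ of dimension $n-d$, write $p\colon Y:=X\times A\to X$ for the first projection, and put $\Delta_Y:=p^*\Delta$. Then $(Y,\Delta_Y)$ is a klt pair of dimension $n$ with $K_Y+\Delta_Y=p^*(K_X+\Delta)$ pseudoeffective, and $Y$ is uniruled if and only if $X$ is, since $A$ contains no rational curves; in particular $(Y,\Delta_Y)$ is non-uniruled whenever $(X,\Delta)$ is. Hence the relevant instance of the hypothesis applies and produces a good minimal model of $(Y,\Delta_Y)$.

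Next I would exhibit a minimal model of $(Y,\Delta_Y)$ which is again a product. By \cite[Corollary 0.3]{BDPP} and \cite[Theorem A]{LT19}, the pair $(X,\Delta)$ has a minimal model $\varphi\colon(X,\Delta)\dashrightarrow(X',\Delta')$. The map $\varphi\times\mathrm{id}_A\colon(Y,\Delta_Y)\dashrightarrow(X'\times A,\,p'^*\Delta')$ is a birational contraction; the prime divisors it contracts are precisely those of the form $F\times A$ with $F$ contracted by $\varphi$, and along such a divisor the log discrepancies of $(X\times A,p^*\Delta)$ and of $(X'\times A,p'^*\Delta')$ equal those of $(X,\Delta)$ and of $(X',\Delta')$ along $F$; since $K_{X'\times A}+p'^*\Delta'=p'^*(K_{X'}+\Delta')$ is nef, it follows that $(X'\times A,p'^*\Delta')$ is a minimal model of $(Y,\Delta_Y)$.

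The crucial input is then that a klt pair admitting a good minimal model has all of its minimal models good. This is standard: any two minimal models of a klt pair are crepant birational --- on a common resolution the pullbacks of the two log canonical divisors coincide, by the negativity lemma (cf.\ \cite{KM98}) --- and semiampleness of a nef $\mathbb R$-divisor is preserved under proper birational pullback and descends along proper birational morphisms. Applying this to $(Y,\Delta_Y)$, which has a good minimal model by the first step, we deduce that $(X'\times A,p'^*\Delta')$ is good, that is, $p'^*(K_{X'}+\Delta')$ is semiample; restricting to a fibre $X'\times\{0\}$ shows that $K_{X'}+\Delta'$ is semiample, so $(X',\Delta')$ is a good minimal model of $(X,\Delta)$. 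Since $X'$ is birational to $X$ it is non-uniruled precisely when $X$ is, so the very same argument establishes the non-uniruled variant. I expect the only point needing care to be the citation for the fact that good minimal models propagate to all minimal models of a klt pair; the product manipulations themselves are routine.
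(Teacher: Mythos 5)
Your argument is correct, and it shares the paper's central device: take the product with an abelian variety of complementary dimension so as to lift the pair to dimension $n$ while preserving non-uniruledness, kltness and pseudoeffectivity of the log canonical divisor, and rely on \cite[Theorem A]{LT19} to produce a minimal model of $(X,\Delta)$ in the lower dimension. Where you genuinely diverge is in the descent from the product back to $X$. The paper first reduces to rational coefficients (Lemma \ref{lem:ratreal}) and to the case where $K_X+\Delta$ is nef, and then descends through birational invariants: a good minimal model of $(Y,\pi^*\Delta)$ forces $\kappa(Y,K_Y+\pi^*\Delta)=\nu(Y,K_Y+\pi^*\Delta)$, these quantities are preserved under the smooth surjective pullback $\pi$, and \cite[Theorem 4.3]{GL13} converts $\kappa=\nu$ into semiampleness on $X$. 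You instead exhibit $(X'\times A,\,p'^*\Delta')$ as an explicit minimal model of $(Y,\Delta_Y)$, invoke the propagation of goodness among minimal models, and restrict to a fibre $X'\times\{0\}\cong X'$. Both of your key inputs are sound: two minimal models of the same pair pull back to $\R$-linearly equivalent divisors on a common resolution (via the negativity lemma, or, entirely within this paper's toolkit, via Lemma \ref{lem:exceptional} combined with the $P_\sigma$-plus-rigidity argument from the proof of Lemma \ref{HasHu}, which also handles $\R$-coefficients), and semiampleness restricts to a fibre. Your route buys independence from the reduction to rational coefficients and from \cite[Theorem 4.3]{GL13}; it costs the bookkeeping of verifying that $\varphi\times\mathrm{id}_A$ is indeed a minimal model (the contracted divisors are exactly the $F\times A$, and discrepancies and nefness are preserved under the smooth projections) and, as you yourself flag, a precise reference for the fact that all minimal models of a pair admitting a good one are good --- that is the one spot to nail down before this is fully watertight.
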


\begin{proof}
We only show the statement for non-uniruled pairs, as the other statement is analogous. 

Let $(X, \Delta)$ be a non-uniruled klt pair of dimension $k<n$ such that $K_X+\Delta$ is pseudoeffective. By passing to a small $\mathbb Q$-factorialization \cite[Theorem 10.5]{Fuj11}, we may assume that $X$ is $\mathbb Q$-factorial. By \cite[Lemma 2.16 and Theorem A]{LT19}, we may assume that $K_X+\Delta$ is nef. By Lemma \ref{lem:ratreal}, we may assume that $\Delta$ is rational. By \cite[Theorem 4.3]{GL13} it suffices to show that $\kappa(X,K_X+\Delta)=\nu(X,K_X+\Delta)$.

To this end, we borrow the idea from \cite[Lemma 3.2]{Has18}. By passing to a log resolution, we may assume that $(X, \Delta)$ is log smooth. Let $A$ be an abelian variety of dimension $n-k$ and set $Y:=X\times A$. Then $Y$ is not uniruled, since $A$ has no rational curves. If $\pi\colon Y\to X$ is the first projection, then $K_Y\sim \pi^*K_X$ and the pair $(Y, \pi^*\Delta)$ is a log smooth klt pair of dimension $n$ since $\pi$ is a smooth morphism. By assumption, $(Y,  \pi^*\Delta)$ has a good minimal model, hence $\kappa(Y,K_Y+\pi^*\Delta)=\nu(Y,K_Y+\pi^*\Delta)$. Since $K_Y+\pi^*\Delta\sim_\Q\pi^*(K_X+\Delta)$, we conclude.
\end{proof}

\begin{thm}\label{HasR}
The Nonvanishing conjecture for smooth projective varieties in dimension $n$ implies the Nonvanishing conjecture in dimensions at most $n$ and the existence of minimal models for log canonical pairs in dimensions at most $n$.
\end{thm}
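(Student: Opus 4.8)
The plan is to bootstrap from the smooth case to the general log canonical case in two stages, first upgrading Nonvanishing and then deducing the existence of minimal models. The starting hypothesis is Nonvanishing for smooth projective varieties in dimension $n$; by restricting to lower-dimensional smooth varieties this immediately gives Nonvanishing for smooth projective varieties in all dimensions $\le n$. The first step is then to pass from smooth varieties to klt pairs. Given a klt pair $(X,\Delta)$ of dimension $k\le n$ with $K_X+\Delta$ pseudoeffective, after a log resolution we may assume $(X,\Delta)$ is log smooth, and we want to produce an effective $\R$-divisor $\R$-linearly equivalent to $K_X+\Delta$. I would invoke the reduction of \cite{Has18}, which shows precisely that Nonvanishing for smooth projective varieties implies Nonvanishing for klt (indeed log canonical) pairs: the point is that the Nakayama--Zariski decomposition of $K_X+\Delta$ on the smooth model lets one reduce to the case $\nu(X,K_X+\Delta)=0$ (otherwise one uses adjunction/restriction arguments to lower the dimension), and then an argument controlling the positive part, combined with Nonvanishing applied to $X$ itself (which is smooth), yields the effective divisor.

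The second step is to extend Nonvanishing from klt pairs to log canonical pairs in dimension $\le n$. Here I would use \cite{Has18} again, or alternatively Theorem~\ref{main} of the present paper together with a dichotomy on uniruledness: if $(X,\Delta)$ is uniruled, Theorem~\ref{main} reduces Nonvanishing in dimension $k$ to Nonvanishing for smooth projective varieties in dimension $k-1$, which we already have; if $(X,\Delta)$ is non-uniruled, then $K_X$ is pseudoeffective by \cite{BDPP}, and one runs a suitable MMP or uses the dlt modification together with the klt case. In practice the cleanest route is to cite \cite{Has18} directly for the lc-from-smooth implication, since that is exactly the content of that reference.

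The third step is to deduce the existence of minimal models for log canonical pairs in dimension $\le n$. By \cite{LT19} (Theorem A), the existence of minimal models for klt pairs in dimension $\le n$ implies the existence of minimal models for log canonical pairs in dimension $\le n$, so it suffices to treat klt pairs. For a klt pair $(X,\Delta)$ with $K_X+\Delta$ pseudoeffective, Step~2 (applied in the klt case) produces $D\ge 0$ with $K_X+\Delta\sim_\R D$, so $\kappa_\iota(X,K_X+\Delta)\ge 0$; one then runs a $(K_X+\Delta)$-MMP with scaling of an ample divisor, which terminates by \cite{BCHM}-type arguments once one knows $K_X+\Delta$ is $\R$-linearly equivalent to an effective divisor (this is the standard fact that pairs of non-negative invariant Iitaka dimension have minimal models, via \cite{BCHM} and \cite{Bir11}). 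This yields a minimal model of $(X,\Delta)$.

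The main obstacle is Step~2, the passage from klt to log canonical Nonvanishing: controlling the non-klt locus requires a dlt modification and a careful induction on dimension, using that the different on the lc centers is again a log canonical (indeed dlt) pair of lower dimension, so one must feed in Nonvanishing for those lower-dimensional strata. This is precisely where the full strength of ``Nonvanishing in all dimensions $\le n$'' (rather than just dimension $n$) is needed, and it is the technical heart of \cite{Has18}; Steps~1 and~3 are comparatively formal given the cited results.
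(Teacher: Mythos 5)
Your overall strategy is sound and lands close to the paper's, which is considerably more compressed: the paper simply observes that both halves of the statement are essentially contained in \cite[Theorem 1.4]{Has18} (Nonvanishing for smooth varieties in dimension $n$ implies Nonvanishing for lc pairs in dimensions at most $n$ \emph{and} the existence of minimal models in the sense of Birkar--Shokurov for lc pairs in dimensions at most $n$), and then invokes \cite[Theorem 1.7]{HH19} to convert Birkar--Shokurov minimal models into minimal models in the usual sense. Your Steps 1 and 2 are in effect a sketch of the internals of \cite{Has18}, which is fine but unnecessary; note only that descending Nonvanishing from dimension $n$ to lower dimensions is not done ``by restricting'' but by the product trick with an abelian variety, as in Lemma \ref{ind}.

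The genuine gap is in your Step 3. You assert that once $K_X+\Delta\sim_\R D\geq 0$, a $(K_X+\Delta)$-MMP with scaling terminates ``by \cite{BCHM}-type arguments.'' This is not a consequence of \cite{BCHM}, which requires the boundary to be big (or the pair to be of log general type); termination with scaling for klt pairs of nonnegative invariant Iitaka dimension is not known in general. What the effectivity of $K_X+\Delta$ actually buys, via Birkar's work and \cite{Has18}, is a minimal model \emph{in the sense of Birkar--Shokurov}, i.e.\ a birational map that may extract divisors and need not be a contraction. Since Theorem \ref{HasR} asserts the existence of minimal models in the usual sense -- a distinction this paper is careful to maintain -- your argument is missing the final conversion step, which is exactly \cite[Theorem 1.7]{HH19}. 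The detour through \cite[Theorem A]{LT19} to reduce from lc to klt pairs is also superfluous once one uses these inputs, since they apply to lc pairs directly.
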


\begin{proof}
The first part of the statement is contained in \cite[Theorem 1.4]{Has18}. For the second part, \cite[Theorem 1.4]{Has18} implies the existence of minimal models in the sense of Birkar--Shokurov for log canonical pairs in dimensions at most $n$. But then \cite[Theorem 1.7]{HH19} gives the existence of minimal models in the usual sense.
\end{proof}

\begin{lemma}\label{lem:exceptional}
Let $(X,\Delta)$ be a log canonical pair and let $ \varphi \colon (X,\Delta) \dashrightarrow (X',\Delta')$ be either a minimal model of $(X,\Delta)$, or a minimal model of $(X,\Delta)$ in the sense of Birkar--Shokurov, or a good minimal model of $(X,\Delta)$ in the sense of Birkar--Hashizume. Let $(p,q)\colon W\to X\times X'$ be a smooth resolution of indeterminacies of $\varphi$. Then there exist an effective $p$-exceptional divisor $E_p$ and an effective $q$-exceptional divisor $E_q$ such that
$$p^*(K_X+\Delta)+E_p\sim_\R q^*(K_{X'}+\Delta')+E_q.$$
In particular, $\kappa_\iota(X,K_X+\Delta)=\kappa_\iota(X',K_{X'}+\Delta')$ and $\nu(X,K_X+\Delta)=\nu(X',K_{X'}+\Delta')$.
\end{lemma}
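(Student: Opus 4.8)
The statement to prove is Lemma~\ref{lem:exceptional}: for a minimal model (or minimal model in the sense of Birkar--Shokurov, or good minimal model in the sense of Birkar--Hashizume) $\varphi\colon(X,\Delta)\dashrightarrow(X',\Delta')$, with $(p,q)\colon W\to X\times X'$ a common resolution, one has $p^*(K_X+\Delta)+E_p\sim_\R q^*(K_{X'}+\Delta')+E_q$ for suitable effective exceptional divisors $E_p$, $E_q$. The natural approach is to compare the two pulled-back log pairs on $W$ via their discrepancy divisors. Write
$$
K_W+\Gamma = p^*(K_X+\Delta)+F_p,\qquad K_W+\Gamma' = q^*(K_{X'}+\Delta')+F_q,
$$
where $\Gamma$ is the strict transform of $\Delta$ plus all $p$-exceptional prime divisors (so $(W,\Gamma)$ is the ``log pullback'' of $(X,\Delta)$ with all exceptional divisors taken with coefficient $1$), and similarly $\Gamma'$ for $(X',\Delta')$; here $F_p\geq 0$ is $p$-exceptional and $F_q\geq 0$ is $q$-exceptional, since $(X,\Delta)$ and $(X',\Delta')$ are log canonical. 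Subtracting,
$$
p^*(K_X+\Delta) - q^*(K_{X'}+\Delta') = (\Gamma'-\Gamma) + (F_p - F_q).
$$
So it suffices to show that $\Gamma'-\Gamma$, and hence the whole right-hand side, can be written as (effective $q$-exceptional) minus (effective $p$-exceptional).

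The key input is the defining inequality of a minimal model: in all three variants one has $a(E,X,\Delta)\leq a(E,X',\Delta')$ for every prime divisor $E$ on $W$, with strict inequality whenever $E$ is $p$-exceptional but \emph{not} $q$-exceptional (the ``exceptional divisors of $\varphi$'' get strictly larger discrepancy). I would split the prime divisors $E$ on $W$ into four classes according to whether they are $p$-exceptional and whether they are $q$-exceptional, and check in each case the sign of the coefficient of $E$ in $\Gamma'-\Gamma = (a(E,X,\Delta)-a(E,X',\Delta'))\cdot E$ summed appropriately — being slightly careful that, for a divisor $E$ on $X$ not contracted by $\varphi$, its coefficient in $\Delta'$ equals its coefficient in $\Delta$ (minimal models don't change the boundary coefficients of surviving divisors), so it contributes $0$. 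For $E$ that is $p$-exceptional: either it is also $q$-exceptional, contributing to a $q$-exceptional effective part when $a(E,X,\Delta)-a(E,X',\Delta')\le 0$ and otherwise to a $p$-exceptional effective part; or it is not $q$-exceptional, in which case the strict inequality forces $a(E,X,\Delta)-a(E,X',\Delta')<0$, so it contributes only to the $q$-exceptional side. The symmetric case ($E$ is $q$-exceptional but not $p$-exceptional) only occurs when $\varphi$ is not a birational contraction (i.e., the Birkar--Hashizume case), and there $E$ corresponds to a divisor contracted by $\varphi^{-1}$ which by definition appears in $\Delta'$ with coefficient $1$, so $a(E,X',\Delta')=0\le a(E,X,\Delta)$ and it contributes to the $p$-exceptional side. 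Folding the discrepancy divisors $F_p$ into $E_p$ and $F_q$ into $E_q$ then gives the claimed linear equivalence.

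The ``in particular'' clause is then immediate from the behaviour of $\kappa_\iota$ and $\nu$ under adding effective exceptional divisors and under proper pullback, as recalled in Section~\ref{sec:prelim}: from $p^*(K_X+\Delta)+E_p\sim_\R q^*(K_{X'}+\Delta')+E_q$ we get $\kappa_\iota(X,K_X+\Delta)=\kappa_\iota(W,p^*(K_X+\Delta)+E_p)=\kappa_\iota(W,q^*(K_{X'}+\Delta')+E_q)=\kappa_\iota(X',K_{X'}+\Delta')$, and likewise for $\nu$.

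**Main obstacle.** The routine part is the four-case bookkeeping; the one genuine subtlety is making sure the argument is uniform across all three notions of minimal model, since they differ precisely in whether $\varphi^{-1}$ can contract divisors and in how $\Delta'$ is prescribed on those divisors. The cleanest route is probably to fix the resolution $W$ once and for all, define $\Gamma$, $\Gamma'$ as above, and phrase everything in terms of the single inequality $a(-,X,\Delta)\le a(-,X',\Delta')$ (strict off the $q$-exceptional locus) together with the boundary-matching convention built into each definition, so that no case distinction between the three variants is actually needed beyond invoking the correct definition for the coefficients of $\Delta'$.
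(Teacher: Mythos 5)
Your overall strategy is exactly the paper's: write the ramification formulas for both $p$ and $q$ on the common resolution, subtract, sort the prime divisors of $W$ according to whether they are $p$- and/or $q$-exceptional, and close each case using the discrepancy inequality from the definition of a minimal model together with the coefficient-one convention for divisors contracted by $\varphi^{-1}$. The paper carries the boundary $p_*^{-1}\Delta$ (resp.\ $q_*^{-1}\Delta'$) on $W$ instead of your ``all exceptionals with coefficient one'' log pullback, but that is only a bookkeeping difference.

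As written, however, your case analysis attaches each justification to the wrong class of divisors. A prime divisor of $W$ that is $p$-exceptional but \emph{not} $q$-exceptional is a divisor of $X'$ contracted by $\varphi^{-1}$ (this class is empty when $\varphi$ is a birational contraction, i.e.\ for a minimal model in the usual sense); the relevant input there is the convention that such divisors appear in $\Delta'$ with coefficient $1$, giving $a(\cdot,X',\Delta')=-1\le a(\cdot,X,\Delta)$ by log canonicity. A divisor that is $q$-exceptional but not $p$-exceptional is a divisor of $X$ contracted by $\varphi$ --- this occurs for all three notions, not only in the Birkar--Hashizume case --- and it is precisely here that the strict inequality $a(\cdot,X,\Delta)<a(\cdot,X',\Delta')$ from the definition is used. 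You have swapped these two cases, and there is also a sign slip in the subtraction: one has $p^*(K_X+\Delta)-q^*(K_{X'}+\Delta')=(\Gamma-\Gamma')+(F_q-F_p)$, not $(\Gamma'-\Gamma)+(F_p-F_q)$. Once the labels and signs are corrected, your four-case check closes exactly as in the paper, where the two key effectivity statements appear as $E_2^q+p_*^{-1}\Delta-q_*^{-1}\varphi_*\Delta\ge0$ (divisors contracted by $\varphi$, via the strict inequality) and $E_1^p+q_*^{-1}E\ge0$ (divisors contracted by $\varphi^{-1}$, via the coefficient-one convention and $a\ge-1$).
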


\begin{proof}
We prove only the case when $(X',\Delta')$ is a minimal model of $(X,\Delta)$ in the sense of Birkar--Shokurov; the other cases are analogous.

Recall that $\Delta'=\varphi_*\Delta+E$, where $E$ is the reduced divisor containing all the $\varphi^{-1}$-exceptional prime divisors in its support. We have the ramification formulas 
$$K_W+p_*^{-1}\Delta\sim_\R p^*(K_X+\Delta)+E_1^p+E_1^+-E_1^-$$
and
$$K_W+q_*^{-1}\Delta'\sim_\R q^*(K_{X'}+\Delta')+E_2^q+E_2^+-E_2^-,$$ 
where $E_1^p$ is $p$-exceptional but not $q$-exceptional, $E_2^q$ is $q$-exceptional but not $p$-exceptional, $E_1^+,E_1^-,E_2^+$ and $E_2^-$ are effective and are both $p$-exceptional and $q$-exceptional, $E_1^+$ and $E_1^-$ have no common components, and $E_2^+$ and $E_2^-$ have no common components. This implies
\begin{align*}
p^*(K_X+\Delta)&+(E_1^p+q_*^{-1}E)+E_1^++E_2^-\\
&\sim_\R q^*(K_{X'}+\Delta')+(E_2^q+p_*^{-1}\Delta-q_*^{-1}\varphi_*\Delta)+E_2^++E_1^-.
\end{align*}
By the definition of minimal models in the sense of Birkar--Shokurov, we have $E_2^q+p_*^{-1}\Delta-q_*^{-1}\varphi_*\Delta\geq0$, and also $E_1^p+q_*^{-1}E\geq0$ since all the coefficients of $E_1^p$ are at least ${-}1$. This proves the result.
\end{proof}

Finally, we need the existence of dlt blowups, see \cite[Theorem 3.1]{KK10} and \cite[Theorem 10.5]{Fuj11}.

\begin{thm}\label{dlt}
Let $(X,\Delta)$ be a log canonical pair. Then there exists a $\Q$-factorial dlt pair $(Y,\Delta_Y)$, called a \emph{dlt model} or \emph{dlt blowup}\footnote{Dlt blowups in the literature occasionally satisfy some additional properties, which we will not need in this paper.} of $(X,\Delta)$, such that there exists a birational morphism $f\colon  Y\to X$ with $K_Y+\Delta_Y\sim_{\mathbb{R}}f^*(K_X+\Delta)$.
\end{thm}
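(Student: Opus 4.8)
The plan is to build $(Y,\Delta_Y)$ from a log resolution of $(X,\Delta)$ by running a relative Minimal Model Program over $X$. First I would choose a log resolution $g\colon X'\to X$ of $(X,\Delta)$ and set $\Delta':=g_*^{-1}\Delta+\mathrm{Exc}(g)$, the sum of the strict transform of $\Delta$ and the reduced $g$-exceptional divisor. Since $g$ is a log resolution, $(X',\Delta')$ is log smooth, hence $\Q$-factorial and dlt. Comparing canonical divisors yields
\[
K_{X'}+\Delta'=g^*(K_X+\Delta)+F,\qquad F:=\sum_{E}\bigl(a(E,X,\Delta)+1\bigr)E,
\]
the sum running over the $g$-exceptional prime divisors $E$. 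Because $(X,\Delta)$ is log canonical, $a(E,X,\Delta)\ge-1$ for each such $E$, so $F\ge0$ is $g$-exceptional.

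Next I would run a $(K_{X'}+\Delta')$-MMP over $X$ with scaling of an ample divisor; this MMP exists since $(X',\Delta')$ is $\Q$-factorial dlt, each step is performed over $X$, and the steps preserve $\Q$-factoriality and the dlt property. As $K_{X'}+\Delta'\sim_{\R,X}F\ge0$, the relative $\R$-linear class is effective over $X$, so this MMP ends with a relative minimal model $\psi\colon X'\dashrightarrow Y$; denote by $f\colon Y\to X$ the induced birational morphism, put $\Delta_Y:=\psi_*\Delta'$, and $F_Y:=\psi_*F$. Then $(Y,\Delta_Y)$ is a $\Q$-factorial dlt pair, $K_Y+\Delta_Y$ is $f$-nef, and $K_Y+\Delta_Y=f^*(K_X+\Delta)+F_Y$ with $F_Y\ge0$.

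It remains to see that $F_Y=0$. The divisor $F_Y$ is effective and $f$-exceptional, and $F_Y=(K_Y+\Delta_Y)-f^*(K_X+\Delta)$ is $f$-nef, so by the negativity lemma $F_Y=0$ (equivalently, the MMP has contracted all of $\mathrm{Supp}(F)$). Hence $K_Y+\Delta_Y=f^*(K_X+\Delta)$, which in particular gives $K_Y+\Delta_Y\sim_\R f^*(K_X+\Delta)$, so $f\colon(Y,\Delta_Y)\to X$ is the desired dlt blowup. The one step that is not purely formal is the termination of the relative MMP in the second paragraph: it does not follow from the (open) general termination of flips, but it is available here because one is contracting an effective divisor that is exceptional over the base, and this is exactly the content of the constructions of dlt models in \cite{KK10,Fuj11}.
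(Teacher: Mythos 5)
Your construction is correct and is precisely the standard argument behind the result the paper simply cites ([KK10, Theorem 3.1], [Fuj11, Theorem 10.5]) without proof: log resolution with the reduced exceptional divisor added, a relative $(K_{X'}+\Delta')$-MMP over $X$, and the negativity lemma to kill the effective exceptional remainder $F_Y$. You correctly identify the only genuinely non-formal point, namely that one can run and terminate this MMP for a dlt (not klt) pair over $X$; this is exactly where the cited references do their work, using that $K_{X'}+\Delta'\sim_{\R,X}F$ with $F\geq0$ exceptional over $X$, so nothing is missing relative to what the paper itself provides.
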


\section{Nonvanishing for uniruled log canonical pairs}\label{sec:nonvan}

In this section, we prove our main result. 

\begin{proof}[Proof of Theorem \ref{main}]
By Theorem \ref{HasR} we may assume the Nonvanishing conjecture in dimensions up to $n-1$ and the existence of minimal models for log canonical pairs in dimensions up to $n-1$.

Let $(X, \Delta)$ be a uniruled log canonical pair of dimension $n$ such that $K_X+\Delta$ is pseudoeffective. By passing to a log resolution, we may assume that $(X, \Delta)$ is a log smooth pair. By \cite[Corollary 4.11]{Deb01} the divisor $K_X$ is not pseudoeffective.

In Steps 1--4 we assume that $\lfloor\Delta\rfloor=0$, so that the pair $(X,\Delta)$ is klt. We treat the remaining case when $\lfloor\Delta\rfloor\neq0$ in Steps 5--8.
	
	\medskip
	
	\emph{Step 1.}
In Steps 1--4 we assume that $\lfloor\Delta\rfloor=0$, so that the pair $(X,\Delta)$ is klt. For $\tau:=\tau(X, 0; \Delta)$ we have $0<\tau\leq1$. Then it suffices to show that $\kappa_{\iota}(X,K_X+\tau\Delta)\geq0$, hence by replacing $\Delta$ by $\tau\Delta$ we may assume that $\tau=1$. 

Analogously as in Step 1 of the proof of \cite[Theorem 3.1]{LT19}\footnote{In short, one runs an MMP for a pair $\big(X,(1-\zeta)\Delta\big)$, where $0<\zeta\ll1$ is a carefully chosen real number. This MMP terminates with a Mori fibre space $W\to Y$, and one replaces $(X,\Delta)$ by a suitable pair on a resolution of indeterminacies of the birational map $\rho\colon X\dashrightarrow W$. One may not replace $(X,\Delta)$ by $(W,\rho_*\Delta)$ since the map $\rho$ is not known to be $(K_X+\Delta)$-non-positive.}, one can show that we may assume the following:
	
	\medskip
	
	\emph{Assumption 1.}
	There exists a fibration $\xi\colon X\to Y$ to a normal projective variety $Y$ with $\dim Y<\dim X$ such that:
	\begin{enumerate}
		\item[(a$_1$)] $\nu(F, (K_X+\Delta)|_F)=0$ and $h^1(F, \mathcal{O}_F)=0$ for a very general fibre $F$ of $\xi$,
		\item[(b$_1$)] $K_X+(1-\varepsilon)\Delta$ is not $\xi$-pseudoeffective for any $\varepsilon>0$.
	\end{enumerate}

	\medskip
	
	\emph{Step 2.}
	If $\dim Y=0$, then $Y$ is a point and $\nu(X, K_X+\Delta)=0$. We conclude by \cite[Theorem 1.2]{Gon11}.
	
	\medskip
	
	\emph{Step 3.}
	Assume from now on that $\dim Y>0$. In this step we show that we may assume the following:
	
	\medskip
	
	\emph{Assumption 2.}
	There exists a fibration $g\colon X\to T$ to a normal projective variety $T$ such that:
	\begin{enumerate}
	\item[(a$_2$)] $g$ is a Mori fibre space given by a contraction of an extremal ray of the pair $\big(X, (1-\varepsilon)\Delta\big)$ for some $0<\varepsilon\ll 1$,
	\item[(b$_2$)] $K_X+\Delta\equiv_T0$.
	\end{enumerate}
\emph{However, instead of the pair $(X,\Delta)$ being log smooth, we may only assume that it is a $\Q$-factorial klt pair.}

\medskip

	To this end, we argue similarly as in Step 3 of the proof of \cite[Theorem 3.1]{LT19}. By (a$_1$) and by \cite[Corollary 2.18]{LT19} the divisor $K_X+\Delta$ is effective over $Y$; in particular, it has an NQC weak Zariski decomposition as in \cite[Definition 2.10]{LT19}. By assumptions stated at the beginning of the proof and by \cite[Theorem 4.4]{LT19} we may run a $(K_X+\Delta)$-MMP with scaling of an ample divisor over $Y$ which terminates, and we obtain a birational contraction $ \theta \colon X \dashrightarrow X' $. Set $\Delta':=\theta_*\Delta $ and let $\xi'\colon X'\to Y$ be the induced morphism. 
	
	By \cite[Lemma 2.19]{LT19} there exists a small rational number $\delta$ such that, if we run a $ \big(K_{X'}+(1-\delta)\Delta' \big) $-MMP with scaling of an ample divisor over $Y$, then this MMP is $(K_{X'}+\Delta')$-trivial. Note that $ K_{X'}+(1-\delta)\Delta'$ is not $\xi'$-pseudoeffective: indeed, by possibly choosing $\delta$ smaller, we may assume that the map $\theta$ is $ \big(K_X+(1-\delta)\Delta\big)$-negative, and the claim follows since $K_X+(1-\delta)\Delta$ is not $\xi$-pseudoeffective by (b$_1$). Therefore, this relative $ \big(K_{X'}+(1-\delta)\Delta' \big) $-MMP terminates with a Mori fibre space $f''\colon X''\to Y''$ over $Y$ by \cite[Corollary 1.3.3]{BCHM}. Let $\theta' \colon X' \dashrightarrow X''$ denote that MMP and set $\Delta'':=\theta'_*\Delta'$. 
	
	\begin{center}
		\begin{tikzcd}
			X \arrow[r, dashed, "\theta"] \arrow[dr, "\xi" swap] & X' \arrow[r, dashed, "\theta'"] \arrow[d, "\xi'"] & X'' \arrow[d, "f''"] \arrow[dl] \\
			& Y & Y'' \arrow[l]
		\end{tikzcd}
	\end{center} 
	
	Then the pair $ (X'',\Delta'') $ is $ \Q $-factorial klt and we have 
	$$K_{X''} + \Delta'' \equiv_{Y''} 0$$
	by \cite[Lemma 2.19]{LT19}. Therefore, by replacing $(X,\Delta)$ by $(X'',\Delta'')$ and by setting $T:=Y''$ and $g:=f''$, we achieve Assumption 2.
		
	\medskip
	
	\emph{Step 4.}
		By \cite[Theorem 0.2]{Amb05} and \cite[Theorem 3.1]{FG12} there exists an effective $\mathbb{R}$-divisor $\Delta_T$ on $T$ such that $(T, \Delta_T)$ is klt and $K_X+\Delta\sim_{\mathbb{R}}g^*(K_T+\Delta_T)$. Since $K_X+\Delta$ is pseudoeffective and $g$ is surjective, the divisor $K_T+\Delta_T$ is pseudoeffective. By assumptions in lower dimensions, we have $\kappa_{\iota}(T,K_T+\Delta_T)\geq0$, and hence $\kappa_{\iota}(X,K_X+\Delta)\geq0$.
		
		This concludes the proof if $\lfloor\Delta\rfloor=0$.

	\medskip
	
	\emph{Step 5.}
	From now on we assume that $\lfloor\Delta\rfloor\neq0$. For 
	$$\tau':=\max\big\{0,\tau(X, \Delta-\lfloor\Delta\rfloor; \lfloor\Delta\rfloor)\big\}$$ 
	we have $0\leq\tau'\leq1$. If $\tau'<1$, then $K_X+\Delta-\lfloor\Delta\rfloor+\tau'\lfloor\Delta\rfloor$ is klt and pseudoeffective, hence $\kappa_{\iota}(X,K_X+\Delta-\lfloor\Delta\rfloor+\tau'\lfloor\Delta\rfloor)\geq0$ by Steps 1--4, and so $\kappa_{\iota}(X,K_X+\Delta)\geq0$.	Thus, we may assume that $\tau'=1$.
	
	Analogously as in Step 1 of the proof of \cite[Theorem 3.1]{LT19}, by picking a decreasing sequence $ \left\{ \varepsilon_i \right\} $ of positive numbers such that $\varepsilon_i\to 0$ and by considering divisors $\Delta_i:=\Delta-\varepsilon_i\lfloor\Delta\rfloor$ instead of divisors $(1-\varepsilon_i)\Delta$, we show that we may assume the following:

\medskip

	\emph{Assumption 3.}
	There exists a fibration $\xi\colon X\to Y$ to a normal projective variety $Y$ with $\dim Y<\dim X$ such that:
	\begin{enumerate}
		\item[(a$_3$)] $\nu(F, (K_X+\Delta)|_F)=0$ and $h^1(F, \mathcal{O}_F)=0$ for a very general fibre $F$ of $\xi$,
		\item[(b$_3$)] $K_X+\Delta-\varepsilon\lfloor\Delta\rfloor$ is not $\xi$-pseudoeffective for any $\varepsilon>0$.
	\end{enumerate}
	
\medskip

\emph{Step 6.}
	If $\dim Y=0$, then we conclude as in Step 2 above.
	
	\medskip
	
	\emph{Step 7.}
		Assume from now on that $\dim Y>0$. Then as in Step 3 above, one can show that we may assume the following:
	
	\medskip
	
	\emph{Assumption 4.}
	There exists a fibration $g\colon X\to T$ to a normal projective variety $T$ such that:
	\begin{enumerate}
	\item[(a$_4$)] $g$ is a Mori fibre space given by a contraction of an extremal ray of the pair $\big(X, \Delta-\varepsilon\lfloor\Delta\rfloor\big)$ for some $0<\varepsilon\ll 1$,
	\item[(b$_4$)] $K_X+\Delta\equiv_T0$.
	\end{enumerate}
\emph{However, instead of the pair $(X,\Delta)$ being log smooth, we may only assume that it is a $\mathbb{Q}$-factorial log canonical pair and $\big(X, \Delta-\varepsilon\lfloor\Delta\rfloor\big)$ is klt.}

\medskip

	\emph{Step 8.}
By Assumption 4, we have that ${-}\big(K_X+\Delta-\varepsilon\lfloor\Delta\rfloor\big)$ is ample over $T$ and $K_{X}+\Delta \equiv_T 0$, thus $\lfloor\Delta\rfloor$ dominates $T$. By Theorem \ref{dlt}, there exists a dlt blowup $\pi\colon (Y, \Delta_Y)\to (X, \Delta)$, and set $g':=g\circ\pi$. Then $\lfloor\Delta_Y\rfloor$ dominates $T$, hence there exists an irreducible component $S$ of $\lfloor\Delta_Y\rfloor$ which dominates $T$. By \cite[Proposition 3.9.2]{Fuj07}\footnote{This result is stated in \cite{Fuj07} for $\Q$-divisors. The proof for $\R$-divisors is analogous, see for instance \cite[Proposition 2.8]{HanLi}.} there exists an effective $\mathbb{R}$-divisor $\Delta_S$ on $S$ such that $(S, \Delta_S)$ is dlt and $(K_Y+\Delta_Y)|_S\sim_{\mathbb{R}}K_S+\Delta_S$.

By Assumption 4, there exists an $\mathbb{R}$-Cartier $\mathbb{R}$-divisor $D$ on $T$ such that $K_Y+\Delta_Y\sim_{\mathbb{R}}g'^*D$. Therefore, $K_S+\Delta_S\sim_{\mathbb{R}}(g'|_S)^*D$. Since $K_Y+\Delta_Y$ is pseudoeffective, the divisor $D$ is pseudoeffective and hence $K_S+\Delta_S$ is pseudoeffective. By assumptions in lower dimensions, we have $\kappa_{\iota}(S,K_S+\Delta_S)\geq0$, and thus
$$\kappa_{\iota}(X,K_X+\Delta)=\kappa_{\iota}(Y,K_Y+\Delta_Y)=\kappa_{\iota}(T,D)=\kappa_{\iota}(S,K_S+\Delta_S)\geq0.$$
This concludes the proof.
\end{proof}

We immediately have:

\begin{proof}[Proof of Corollary \ref{uni4}]
The Nonvanishing conjecture for terminal threefolds was proved in \cite{Miy87,Miy88}; for a different proof, see \cite[Theorem 6.7 and Remark 6.8]{LP18}. Then the result follows from Theorem \ref{main}.
\end{proof}

As mentioned in the introduction, we have the following corollary.

\begin{coro}\label{equi}
The Nonvanishing conjecture in dimension $n-1$ is equivalent to the Nonvanishing conjecture for uniruled projective log canonical pairs in dimension $n$.
\end{coro}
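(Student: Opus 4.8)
The statement is an equivalence, so I would prove the two implications separately. The forward direction---that the Nonvanishing conjecture in dimension $n-1$ implies the Nonvanishing conjecture for uniruled projective log canonical pairs in dimension $n$---is almost immediate from Theorem \ref{main}. Indeed, Theorem \ref{main} gives exactly this implication, except that its hypothesis is the Nonvanishing conjecture for \emph{smooth} projective varieties in dimension $n-1$, which is a weaker assumption than the full Nonvanishing conjecture in dimension $n-1$; so the forward implication follows at once.

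For the reverse direction, I would assume the Nonvanishing conjecture for uniruled projective log canonical pairs in dimension $n$ and deduce the Nonvanishing conjecture in dimension $n-1$ (for all log canonical pairs, not just smooth or uniruled ones). The idea is the same dimension-raising trick used in the proof of Lemma \ref{ind}: given a log canonical pair $(X,\Delta)$ of dimension $n-1$ with $K_X+\Delta$ pseudoeffective, pass to a dlt blowup or log resolution to reduce to the log smooth case, then form the product $Y := X \times \mathbb{P}^1$ with projection $\pi \colon Y \to X$, equipped with the boundary $\Delta_Y := \pi^*\Delta$ (equivalently $\Delta \times \mathbb{P}^1$). Then $Y$ is uniruled of dimension $n$, the pair $(Y,\Delta_Y)$ is log canonical since $\pi$ is smooth, and $K_Y + \Delta_Y \sim_{\mathbb R} \pi^*(K_X+\Delta) + \pi^*K_{\mathbb{P}^1}$-type corrections---more precisely, by adjunction for the smooth morphism $\pi$ one gets $K_Y \sim \pi^*K_X + K_{Y/X}$, and $K_{Y/X}$ restricted to a fibre is $K_{\mathbb{P}^1}$, which is not effective, so this needs slight care. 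A cleaner choice is to take $Y := X \times E$ for an elliptic curve $E$, so that $K_Y \sim \pi^*K_X$ exactly; but then $Y$ need not be uniruled. So I would instead take $Y := X \times \mathbb{P}^1$ and observe that $K_Y + \Delta_Y + \pi^*(\text{pt})$-style adjustments still have the right pseudoeffectivity and Iitaka-dimension behaviour: writing $K_Y + \Delta_Y \sim_{\mathbb R} \pi^*(K_X+\Delta) - 2\pi^*(\mathrm{pt})$ is a problem since the right side may fail to be pseudoeffective.

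The correct fix, which is the one step requiring thought: instead of $\mathbb{P}^1$, use a uniruled variety $Z$ of dimension $1$ such that... no---dimension-$1$ uniruled means $\mathbb{P}^1$. So I would instead raise dimension by $1$ using $\mathbb{P}^1$ but add a boundary along the product: set $Y := X \times \mathbb{P}^1$ and $\Delta_Y := \pi^*\Delta + X \times \{0\} + X \times \{\infty\}$. Then $(Y,\Delta_Y)$ is log canonical (it is log smooth after resolving, if $(X,\Delta)$ is log smooth), $Y$ is uniruled, and $K_Y + \Delta_Y \sim_{\mathbb R} \pi^*(K_X+\Delta) + \sigma^*K_{\mathbb{P}^1} + \{0\}+\{\infty\} \sim_{\mathbb R} \pi^*(K_X+\Delta)$ since $K_{\mathbb{P}^1} + \{0\} + \{\infty\} \sim 0$. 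Thus $K_Y+\Delta_Y$ is pseudoeffective, and by the assumed Nonvanishing for uniruled log canonical pairs in dimension $n$ there is $D_Y \geq 0$ with $K_Y + \Delta_Y \sim_{\mathbb R} D_Y$, i.e. $\pi^*(K_X+\Delta) \sim_{\mathbb R} D_Y \geq 0$. Pushing forward (or restricting to a general fibre of the projection $Y \to \mathbb{P}^1$, or using that $\kappa_\iota$ is invariant under proper pullback), one concludes $\kappa_\iota(X, K_X+\Delta) \geq 0$, which is exactly Nonvanishing in dimension $n-1$. The main obstacle is precisely this bookkeeping: ensuring the chosen pair on $Y$ is genuinely log canonical (handling the non-log-smooth case via a dlt blowup, using Theorem \ref{dlt} and the pullback-invariance of $\kappa_\iota$ stated in Section \ref{sec:prelim}) and that the linear equivalence descends correctly to yield an effective $\mathbb{R}$-divisor on $X$ itself.
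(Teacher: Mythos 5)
Your proposal is correct and uses the same central device as the paper: form $Y=X\times\mathbb{P}^1$ and add two fibres of the second projection to the boundary so that $K_Y+\Delta_Y\sim_\R p^*(K_X+\Delta)$, then apply the assumed Nonvanishing for uniruled pairs in dimension $n$ and the pullback-invariance of $\kappa_\iota$. The only difference is that the paper first reduces the reverse implication to smooth varieties with zero boundary via Theorem \ref{HasR}, whereas you carry the full boundary $\pi^*\Delta$ through the product after a log resolution (exactly as the paper itself does in the proof of Corollary \ref{rev}); both routes work, and yours avoids invoking Theorem \ref{HasR} at the cost of the routine log-smooth reduction.
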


\begin{proof}
One direction follows from Theorem \ref{main}. For the other direction, we only need to prove the Nonvanishing conjecture for smooth projective varieties in dimension $n-1$ by Theorem \ref{HasR}. 

Let $X$ be a smooth projective variety of dimension $n-1$ such that $K_X$ is pseudoeffective. Consider $Y:=X\times\mathbb{P}^1$. Then $\mathcal O_Y(K_Y)\simeq p^*\mathcal O_X(K_X)\otimes q^*\mathcal{O}_{\mathbb{P}^1}(-2)$, where $p$ and $q$ are the corresponding projections. Pick two different points $a$ and $b$ on $\mathbb{P}^1$. Then $K_Y+q^*a+q^*b\sim p^*K_X$ is pseudoeffective since $K_X$ is pseudoeffective. Since $Y$ is uniruled and of dimension $n$, and the pair $(Y,q^*a+q^*b)$ is dlt, we have
$$\kappa(X,K_X)=\kappa(Y, K_Y+q^*a+q^*b)\geq0$$
by assumption.
\end{proof}

\section{Around the existence of good minimal models}

In this section we prove Theorem \ref{main2} and several related results which are of independent interest. The following lemma is essentially a combination of main results of \cite{HH19}. Note, however, that it is not a special case of \cite[Theorem 1.2]{HH19} since we use different notions of good minimal models.

\begin{lemma}\label{HasHu}
Assume the existence of good minimal models for projective klt pairs with rational boundaries in dimension $n-1$. 

Let $(X,\Delta)$ be a projective log canonical pair of dimension $n$ such that $\kappa_{\iota}(X,K_X+\Delta)=\nu(X,K_X+\Delta)$. Then $(X,\Delta)$ has a good minimal model. If additionally $K_X+\Delta$ is nef, then it is semiample.
\end{lemma}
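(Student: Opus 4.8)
The plan is to reduce to the klt, rational-boundary case in dimension $n$ and then invoke the hypothesis together with the Hacon--Hashizume machinery. First I would pass to a dlt blowup $\pi\colon (Y,\Delta_Y)\to (X,\Delta)$ using Theorem \ref{dlt}, so that $K_Y+\Delta_Y\sim_\R \pi^*(K_X+\Delta)$; by the pullback invariance of $\kappa_\iota$ and $\nu$ recorded in Section \ref{sec:prelim}, we still have $\kappa_\iota(Y,K_Y+\Delta_Y)=\nu(Y,K_Y+\Delta_Y)$, and it suffices to produce a good minimal model of $(Y,\Delta_Y)$ (and to show $K_Y+\Delta_Y$ semiample when $K_X+\Delta$ is nef, using that a birational contraction to a variety on which the pushforward is semiample, with the appropriate discrepancy inequalities, transports semiampleness back via Lemma \ref{lem:exceptional} — actually when $K_X+\Delta$ is already nef one argues more directly, see below). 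Thus we may assume $(X,\Delta)$ is $\Q$-factorial dlt.

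Next I would invoke \cite[Theorem 1.7]{HH19}: to obtain a good minimal model in the usual sense it is enough to obtain a minimal model in the sense of Birkar--Shokurov whose pushforward divisor is semiample, i.e.\ a good minimal model in the sense of Birkar--Hashizume. And for the latter, \cite[Theorem 1.2]{HH19} (or the log canonical abundance input behind it) says that a $\Q$-factorial dlt pair $(X,\Delta)$ has a good minimal model in the sense of Birkar--Hashizume provided $K_X+\Delta$ has an NQC weak Zariski decomposition \emph{and} abundance-type information is available in the relevant dimension. The role of the condition $\kappa_\iota=\nu$ is exactly to upgrade a minimal model to a \emph{good} one: once we run an MMP and reach a model $(X',\Delta')$ on which $K_{X'}+\Delta'$ is nef, Lemma \ref{lem:exceptional} gives $\kappa_\iota(X',K_{X'}+\Delta')=\nu(X',K_{X'}+\Delta')$, so it remains to prove that a nef log canonical divisor with $\kappa_\iota=\nu$ is semiample — which is the content of the ``additionally'' clause, and handling that clause first is the right order.

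So the core step is: if $(X,\Delta)$ is log canonical, $K_X+\Delta$ is nef, and $\kappa_\iota(X,K_X+\Delta)=\nu(X,K_X+\Delta)$, then $K_X+\Delta$ is semiample. Here I would take a dlt blowup to reduce to $(X,\Delta)$ $\Q$-factorial dlt with $K_X+\Delta$ nef (nefness and both invariants are preserved since the pullback differs by $0$). Then restrict to the lc centers: by adjunction along $\lfloor\Delta\rfloor$ (using \cite[Proposition 3.9.2]{Fuj07} as in Step 8 of the proof of Theorem \ref{main}) and Noetherian induction on the dimension of strata, together with the non-vanishing/abundance inputs in dimension $\le n-1$ coming from ``good minimal models for klt pairs with rational boundaries in dimension $n-1$'' (via Lemma \ref{lem:ratreal}, Lemma \ref{ind}, and \cite[Theorem 4.3]{GL13}), one shows $(K_X+\Delta)|_{\lfloor\Delta\rfloor}$ is semiample; on the open part $(X,\Delta-\lfloor\Delta\rfloor)$ one is in the klt case. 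For the klt case with $K_X+\Delta$ nef and $\kappa_\iota=\nu$: reduce to rational $\Delta$ by Lemma \ref{lem:ratreal}, so $\kappa=\nu$, and conclude by \cite[Theorem 4.3]{GL13} (Gongyo--Lehmann), which is precisely the statement that a klt pair with nef $K_X+\Delta$ and $\kappa=\nu$ has semiample $K_X+\Delta$ under the relevant lower-dimensional abundance — exactly what our hypothesis supplies. Then semiampleness on the boundary strata plus on the klt locus is glued by an Extension/Keel-type argument (as in \cite{HH19}) to get semiampleness of $K_X+\Delta$ itself.

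Finally, for the general (not-necessarily-nef) case: with $(X,\Delta)$ $\Q$-factorial dlt, $K_X+\Delta$ pseudoeffective (which follows from $\kappa_\iota\ge 0$, implied by $\kappa_\iota=\nu\ge 0$ when $\nu\ge0$; if $\nu=\kappa_\iota=-\infty$ the divisor is not pseudoeffective and there is nothing to prove, or rather the convention makes the statement vacuous), run a $(K_X+\Delta)$-MMP with scaling. The existence of an NQC weak Zariski decomposition is guaranteed because $\kappa_\iota\ge0$ gives an effective $\R$-divisor $\R$-linearly equivalent to $K_X+\Delta$, so \cite[Theorem 4.1 or 4.4]{LT19}-type results (or directly \cite[Theorem 1.2]{HH19}) yield termination with a minimal model $(X',\Delta')$. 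By Lemma \ref{lem:exceptional}, $\kappa_\iota(X',K_{X'}+\Delta')=\nu(X',K_{X'}+\Delta')$ and $K_{X'}+\Delta'$ is nef, so the nef case just proved shows it is semiample; hence $(X',\Delta')$ is a good minimal model of $(X,\Delta)$ in the usual sense, and transporting back through the dlt blowup (whose pullback is exact) finishes the proof.

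\medskip

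\emph{Main obstacle.} The serious point is the nef case, specifically gluing semiampleness from the lc boundary $\lfloor\Delta\rfloor$ and the klt interior to the whole pair; this is where the real work of \cite{HH19} lies, and one must be careful that the lower-dimensional hypothesis (good minimal models for non-uniruled klt pairs, resp.\ klt pairs, with rational boundaries in dimension $n-1$) is genuinely enough to run the induction on strata — the uniruled-versus-non-uniruled bookkeeping and the passage rational $\leftrightarrow$ real (Lemmas \ref{lem:ratreal}, \ref{ind}) must be threaded through correctly. The weak Zariski decomposition hypothesis needed to apply \cite{HH19} in the non-nef case is comparatively routine, since $\kappa_\iota\ge 0$ hands it to us directly.
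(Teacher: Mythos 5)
Your overall skeleton matches the paper's: reduce via Lemmas \ref{lem:ratreal} and \ref{ind} to good minimal models for klt pairs in dimensions at most $n-1$, use \cite[Theorems 1.2 and 1.7]{HH19} to run a terminating $(K_X+\Delta)$-MMP and thereby reduce to the case where $K_X+\Delta$ is nef, and then prove semiampleness. But the core of the lemma is precisely that last step, and there your argument has a genuine gap. You propose to prove ``nef $+$ $\kappa_\iota=\nu$ implies semiample'' by adjunction to the strata of $\lfloor\Delta\rfloor$, induction, and a gluing ``by an Extension/Keel-type argument (as in \cite{HH19})''. That gluing is not a citable statement in the form you need: the packaged results of \cite{HH19} produce a good minimal model \emph{in the sense of Birkar--Hashizume}, i.e.\ a possibly different birational model $(X',\Delta')$ on which $K_{X'}+\Delta'$ is semiample, and they do not assert that the nef divisor $K_X+\Delta$ on the \emph{original} model is semiample. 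Deferring the gluing to \cite{HH19} therefore lands you back exactly at the point you started from, and re-proving the gluing from scratch is not done in your sketch.

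The paper closes this gap differently and much more cheaply. Since the Birkar--Hashizume model map $\varphi\colon X\dashrightarrow X'$ is not known to be $(K_X+\Delta)$-non-positive, one cannot descend semiampleness directly; your appeal to ``transports semiampleness back via Lemma \ref{lem:exceptional}'' does not work as stated, because that lemma only gives $p^*(K_X+\Delta)+E_p\sim_\R q^*(K_{X'}+\Delta')+E_q$ with effective exceptional $E_p,E_q$, and adding an effective exceptional divisor can destroy semiampleness. The missing observation is that when \emph{both} $p^*(K_X+\Delta)$ and $q^*(K_{X'}+\Delta')$ are nef, applying $P_\sigma$ to both sides of that relation strips the exceptional parts by \cite[Lemma 2.16]{GL13}, yielding $p^*(K_X+\Delta)\sim_\R q^*(K_{X'}+\Delta')$ on the nose; semiampleness of the right-hand side then gives a fibration $f\colon W\to Z$ with $p^*(K_X+\Delta)\sim_\R f^*A$, and the Rigidity lemma \cite[Lemma 1.15]{Deb01} shows $f$ factors through $p$, so $K_X+\Delta$ itself is semiample. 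Without this $P_\sigma$-plus-Rigidity step (or a genuine substitute), your proof does not close. A minor additional remark: your worry about uniruled-versus-non-uniruled bookkeeping is misplaced for this lemma, whose hypothesis concerns all klt pairs with rational boundaries in dimension $n-1$; that issue only arises in Theorem \ref{main2}.
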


\begin{proof}
By Lemmas \ref{lem:ratreal} and \ref{ind}, we may assume the existence of good minimal models for klt pairs in dimensions at most $n-1$. 

By \cite[Theorem 1.2 and Theorem 1.7]{HH19}, we may run a $(K_X+\Delta)$-MMP with scaling of an ample divisor which terminates. Hence, we may assume that $K_X+\Delta$ is additionally nef, and we need to show that $K_X+\Delta$ is semiample. 

By \cite[Theorem 1.2]{HH19} the pair $(X,\Delta)$ has a good minimal model $(X',\Delta')$ in the sense of Birkar--Hashizume. Let $(p,q)\colon W\to X\times X'$ be a smooth resolution of indeterminacies of the map $\varphi\colon X\dashrightarrow X'$. Then by Lemma \ref{lem:exceptional} there exist an effective $p$-exceptional divisor $E_p$ and an effective $q$-exceptional divisor $E_q$ such that
$$p^*(K_X+\Delta)+E_p\sim_\R q^*(K_{X'}+\Delta')+E_q.$$
By \cite[Lemma 2.16]{GL13} and since $p^*(K_X+\Delta)$ and $q^*(K_{X'}+\Delta')$ are nef, we obtain
\begin{align*}
p^*(K_X+\Delta)&=P_\sigma\big(p^*(K_X+\Delta)+E_p\big)\\
&\sim_\R P_\sigma\big(q^*(K_{X'}+\Delta')+E_q\big)=q^*(K_{X'}+\Delta').
\end{align*}
This gives that $p^*(K_X+\Delta)$ is semiample, hence there exists a fibration $f\colon W\to Z$ and an ample $\R$-divisor $A$ on $Z$ such that $p^*(K_X+\Delta)\sim_\R f^*A$. If $C$ is any curve contracted by $p$, then $A\cdot f(C)=f^*A\cdot C=p^*(K_X+\Delta)\cdot C=0$, and therefore, $C$ is contracted by $f$. By the Rigidity lemma \cite[Lemma 1.15]{Deb01} the map $f$ factors through $p$, so $K_X+\Delta$ is $\R$-linearly equivalent to the pullback of $A$ to $X$, and thus is semiample.
\end{proof}

\begin{prop}\label{lem:tau}
Assume the existence of good minimal models for projective dlt pairs in dimension $n-1$. 

Let $(X,\Delta)$ be a pseudoeffective $\Q$-factorial projective dlt pair of dimension $n$ such that $\lfloor\Delta\rfloor\neq0$ and $K_X+\Delta-\varepsilon\lfloor\Delta\rfloor$ is not pseudoeffective for any $\varepsilon>0$. Then $\kappa_\iota(X,K_X+\Delta)=\nu(X,K_X+\Delta)$.
\end{prop}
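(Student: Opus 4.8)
The plan is to reduce Proposition \ref{lem:tau} to Lemma \ref{HasHu} by extracting a log canonical center that dominates a Mori fibre space base, exactly as in Steps 7--8 of the proof of Theorem \ref{main}. First I would observe that the hypothesis ``$K_X+\Delta-\varepsilon\lfloor\Delta\rfloor$ is not pseudoeffective for any $\varepsilon>0$'' is precisely the global analogue of condition (b$_4$) there, with $Y$ a point; so I would like to run an MMP to produce a genuine fibre space structure relative to which $K_X+\Delta$ is numerically trivial. Concretely, since $(X,\Delta)$ is pseudoeffective and $\Q$-factorial dlt, it has an NQC weak Zariski decomposition, so by the results of \cite{LT19} used in the proof of Theorem \ref{main} (together with \cite{BCHM}) one may run a suitable $(K_X+\Delta)$-trivial MMP on a small perturbation $K_X+\Delta-\delta\lfloor\Delta\rfloor$ which terminates with a Mori fibre space $g\colon X\to T$ with $\dim T<\dim X$, $K_X+\Delta\equiv_T 0$, and $-(K_X+\Delta-\varepsilon\lfloor\Delta\rfloor)$ ample over $T$ for $0<\varepsilon\ll1$; as in Step 7 one may have to replace $(X,\Delta)$ by the output pair, which is still $\Q$-factorial dlt with $\lfloor\Delta\rfloor\ne0$. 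Since the invariant Iitaka dimension and the numerical dimension are birational invariants in the appropriate sense (and are preserved along such MMP steps by Lemma \ref{lem:exceptional}), it suffices to prove the equality $\kappa_\iota=\nu$ after this replacement.

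Next, as in Step 8, the relative ampleness of $-(K_X+\Delta-\varepsilon\lfloor\Delta\rfloor)$ together with $K_X+\Delta\equiv_T0$ forces $\lfloor\Delta\rfloor$ to dominate $T$, so I can pick an irreducible component $S$ of $\lfloor\Delta\rfloor$ dominating $T$; here $(X,\Delta)$ being dlt (not merely log canonical) lets me apply adjunction \cite[Proposition 3.9.2]{Fuj07} directly to get an effective $\R$-divisor $\Delta_S$ with $(S,\Delta_S)$ dlt and $(K_X+\Delta)|_S\sim_\R K_S+\Delta_S$ — no dlt blowup is needed, which is the one simplification over Step 8. Writing $K_X+\Delta\sim_\R g^*D$ for an $\R$-Cartier pseudoeffective $D$ on $T$ (from $\equiv_T0$ plus pseudoeffectivity of $K_X+\Delta$, via the base-point-free/semiampleness argument over $T$ as in the proof of Theorem \ref{main}), restriction gives $K_S+\Delta_S\sim_\R (g|_S)^*D$, so $K_S+\Delta_S$ is pseudoeffective.

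Now I would invoke the hypothesis of the Proposition: good minimal models exist for projective dlt pairs in dimension $n-1$, and $(S,\Delta_S)$ is such a pair (of dimension $\le n-1$, using Lemma \ref{ind} if $\dim S<n-1$, though one should check the dlt version of that reduction or simply apply the hypothesis to $S$ directly together with the klt-to-dlt reductions). Having a good minimal model for $(S,\Delta_S)$ yields $\kappa_\iota(S,K_S+\Delta_S)=\nu(S,K_S+\Delta_S)$. Then the chain of equalities
\begin{align*}
\kappa_\iota(X,K_X+\Delta)&=\kappa_\iota(T,D)=\kappa_\iota(S,K_S+\Delta_S)\\
&=\nu(S,K_S+\Delta_S)=\nu(T,D)=\nu(X,K_X+\Delta)
\end{align*}
finishes the proof, where the outer equalities use $K_X+\Delta\sim_\R g^*D$ and $K_S+\Delta_S\sim_\R (g|_S)^*D$ together with surjectivity of $g$ and $g|_S$ and the pullback-invariance of both dimensions recorded in Section \ref{sec:prelim}.

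\textbf{Main obstacle.} The delicate point is the MMP reduction in the first paragraph: justifying that one can run an MMP that is simultaneously $(K_X+\Delta)$-trivial and terminates with a Mori fibre space over which $-(K_X+\Delta-\varepsilon\lfloor\Delta\rfloor)$ is ample, while retaining the $\Q$-factorial dlt property and the non-pseudoeffectivity condition in the relative setting — this is where one must carefully transplant the arguments of Steps 1, 3 and 7 of Theorem \ref{main} (which are themselves built on \cite{LT19}) to the dlt, non-log-smooth setting. Everything after the Mori fibre space has been produced is a routine adjunction-and-pullback argument.
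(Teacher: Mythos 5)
Your overall strategy is exactly the paper's: reduce via the perturbed MMP of Steps 5--7 of Theorem \ref{main} to a Mori fibre space $g\colon X\to T$ with $K_X+\Delta\equiv_T 0$, restrict to a component $S$ of the boundary dominating $T$ via adjunction, and conclude from the dimension-$(n-1)$ hypothesis together with the pullback-invariance of $\kappa_\iota$ and $\nu$. However, there is one genuine gap, and it is precisely at the point you advertise as a ``simplification''. You assert that after the MMP reduction the output pair ``is still $\Q$-factorial dlt'', and on that basis you apply adjunction \cite[Proposition 3.9.2]{Fuj07} directly to a component $S$ of $\lfloor\Delta\rfloor$ on $X$ itself, skipping the dlt blowup. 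This is not justified: the MMP being run is an MMP for the \emph{perturbed} klt pair $(X,\Delta-\delta\lfloor\Delta\rfloor)$ whose steps are only $(K_X+\Delta)$-\emph{trivial}, not $(K_X+\Delta)$-negative, so while log canonicity of the pushed-forward pair is preserved, dlt-ness in general is not. The paper is explicit about this --- its Assumption states that after the reduction one may only assume the pair is $\Q$-factorial log canonical with $(X,\Delta-\varepsilon\lfloor\Delta\rfloor)$ klt --- and this is exactly why Step 8 first passes to a dlt blowup $\pi\colon(Y,\Delta_Y)\to(X,\Delta)$ before choosing $S\subseteq\lfloor\Delta_Y\rfloor$ and performing adjunction. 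For a pair that is merely log canonical, a component of $\lfloor\Delta\rfloor$ need not even be normal and the adjunction statement you invoke does not apply in the form you use it.

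The fix is to reinstate the dlt blowup, i.e.\ to follow Step 8 verbatim: after the reduction, take $\pi\colon(Y,\Delta_Y)\to(X,\Delta)$ as in Theorem \ref{dlt}, note that $\lfloor\Delta_Y\rfloor$ still dominates $T$, choose $S\subseteq\lfloor\Delta_Y\rfloor$ dominating $T$, and run your adjunction-and-pullback chain on $Y$ and $S$; since $K_Y+\Delta_Y\sim_\R\pi^*(K_X+\Delta)$, both $\kappa_\iota$ and $\nu$ are unaffected. The remainder of your argument (the descent $K_X+\Delta\sim_\R g^*D$, the pseudoeffectivity of $K_S+\Delta_S$, and the deduction of $\kappa_\iota=\nu$ on $S$ from the existence of a good minimal model in dimension $n-1$) matches the paper and is fine; your worry about $\dim S<n-1$ is moot since $S$ is a prime divisor on an $n$-fold.
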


\begin{proof}
	Analogously as in Steps 5--7 of the proof of Theorem \ref{main} one can show that we may assume the following:

	\medskip
	
	\emph{Assumption.}
	There exists a fibration $g\colon X\to T$ to a normal projective variety $T$ such that:
	\begin{enumerate}
	\item[(a)] $g$ is a Mori fibre space given by a contraction of an extremal ray of the pair $\big(X, \Delta-\varepsilon\lfloor\Delta\rfloor\big)$ for some $0<\varepsilon\ll 1$,
	\item[(b)] $K_X+\Delta\equiv_T0$.
	\end{enumerate}
\emph{However, instead of the pair $(X,\Delta)$ being $\mathbb{Q}$-factorial dlt, we may only assume that it is a $\mathbb{Q}$-factorial log canonical pair and $\big(X, \Delta-\varepsilon\lfloor\Delta\rfloor\big)$ is klt.}

\medskip

Now we use the notation from Step 8 of the proof of Theorem \ref{main}. Then that step shows that $\kappa_{\iota}(X,K_X+\Delta)=\kappa_{\iota}(S,K_S+\Delta_S)$, and one can show analogously that $\nu(X,K_X+\Delta)=\nu(S,K_S+\Delta_S)$. We conclude by our assumption in dimension $n-1$.
\end{proof}

We are now ready to prove:

\begin{proof}[Proof of Theorem \ref{main2}]
The proof is by induction on $n$, the claim being easy when $n=1$. By Lemmas \ref{lem:ratreal} and \ref{ind} we may assume the existence of good minimal models for non-uniruled klt pairs in dimensions at most $n-1$, hence by induction we may assume the existence of good minimal models for all log canonical pairs in dimensions at most $n-1$.

Let $(X, \Delta)$ be a log canonical pair of dimension $n$ such that $K_X+\Delta$ is pseudoeffective. By Lemma \ref{HasHu} it suffices to show that $\kappa_{\iota}(X, K_X+\Delta)=\nu(X, K_X+\Delta)$. 

By passing to a log resolution we may assume that $(X, \Delta)$ is log smooth. 

\medskip

\emph{Step 1.}
We assume first that $\lfloor\Delta\rfloor=0$. The pair $(X,\Delta)$ has a minimal model $(X',\Delta')$ by \cite[Theorem A]{LT19} and by assumption, since smooth varieties with the pseudoeffective canonical classes are not uniruled by \cite[Corollary 4.11]{Deb01}. By \cite[Proposition 3.2(3)]{Bir11} there exist finitely many $\Q$-divisors $\Delta_i$ and positive real numbers $r_i$ such that each pair $(X',\Delta_i)$ is klt, each $K_{X'}+\Delta_i$ is nef and $K_{X'}+\Delta'=\sum r_i(K_{X'}+\Delta_i)$. By \cite[Theorem 1.1]{DL15} and by assumption each $K_{X'}+\Delta_i$ is semiample, hence $K_{X'}+\Delta'$ is semiample.

\medskip

\emph{Step 2.}
From now on we assume that $\lfloor\Delta\rfloor\neq0$. By Proposition \ref{lem:tau} we may assume that there exists $0<\varepsilon<1$ such that $K_X+\Delta-\varepsilon\lfloor\Delta\rfloor$ is pseudoeffective.

We have that $\kappa_\iota\big(X, K_X+\Delta-\varepsilon\lfloor\Delta\rfloor\big)\geq0$ by Step 1 since $\big(X, \Delta-\varepsilon\lfloor\Delta\rfloor\big)$ is a klt pair. Therefore, there exists an $\mathbb{R}$-divisor $D\geq0$ such that $K_X+\Delta-\varepsilon\lfloor\Delta\rfloor\sim_\R D$. Pick $0<\delta<\varepsilon$. Then 
$$K_X+\Delta-(\varepsilon-\delta)\lfloor\Delta\rfloor\sim_\R D+\delta\lfloor\Delta\rfloor\quad\text{and}\quad K_X+\Delta\sim_\R D+\varepsilon\lfloor\Delta\rfloor.$$
Since $\big(X,\Delta-(\varepsilon-\delta)\lfloor\Delta\rfloor\big)$ is a klt pair and $K_X+\Delta-(\varepsilon-\delta)\lfloor\Delta\rfloor$ is pseudoeffective, the pair $\big(X,\Delta-(\varepsilon-\delta)\lfloor\Delta\rfloor\big)$ has a good minimal model by Step 1. Therefore, we have $\kappa_\iota\big(X,D+\delta\lfloor\Delta\rfloor\big)=\nu\big(X,D+\delta\lfloor\Delta\rfloor\big)$, so $\kappa_\iota\big(X,D+\varepsilon\lfloor\Delta\rfloor\big)=\nu\big(X,D+\varepsilon\lfloor\Delta\rfloor\big)$ by \cite[Lemma 2.9]{DL15}\footnote{This result is stated in \cite{DL15} for $\Q$-divisors, but the proof works verbatim for $\R$-divisors.}. This concludes the proof.
\end{proof}

An immediate corollary is the following.

\begin{coro}\label{rev}
The existence of good minimal models for uniruled projective log canonical pairs in dimension $n$ implies the existence of good minimal models for projective log canonical pairs in dimensions at most $n-1$.
\end{coro}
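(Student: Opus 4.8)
The plan is to combine Theorem \ref{main2} with the product trick already used in the proofs of Lemma \ref{ind} and Corollary \ref{equi}, replacing the auxiliary abelian variety used there by $\mathbb P^1$, so that the resulting auxiliary pair is \emph{uniruled} and the hypothesis becomes applicable. The first step is a purely formal reduction. By Lemma \ref{lem:ratreal} the existence of good minimal models for non-uniruled klt pairs with rational boundaries in dimension $n-1$ gives the same for all non-uniruled klt pairs in dimension $n-1$, hence by Lemma \ref{ind} for all non-uniruled klt pairs in dimensions at most $n-1$; plugging this into Theorem \ref{main2} in each dimension $m\le n-1$ then yields the existence of good minimal models for all projective log canonical pairs in dimensions at most $n-1$. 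Therefore it suffices to prove the existence of good minimal models for projective non-uniruled klt pairs with rational boundaries in dimension $n-1$.

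So let $(X,\Delta)$ be such a pair. Since $X$ is not uniruled, $K_X$ is pseudoeffective by \cite[Corollary 0.3]{BDPP}, hence so is $K_X+\Delta$. Exactly as in the proof of Lemma \ref{ind} — passing to a small $\Q$-factorialization \cite[Theorem 10.5]{Fuj11}, running a $(K_X+\Delta)$-MMP by \cite[Lemma 2.16 and Theorem A]{LT19}, and using Lemma \ref{lem:ratreal} once more — we may assume that $X$ is $\Q$-factorial, that $K_X+\Delta$ is nef, and that $\Delta$ is rational, the quantities $\kappa_\iota(X,K_X+\Delta)$ and $\nu(X,K_X+\Delta)$ remaining unchanged throughout. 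By \cite[Theorem 4.3]{GL13} it is then enough to show that $\kappa(X,K_X+\Delta)=\nu(X,K_X+\Delta)$. To this end set $Y:=X\times\mathbb P^1$, with projections $p\colon Y\to X$ and $q\colon Y\to\mathbb P^1$, fix two distinct points $a,b\in\mathbb P^1$, and let $\Delta_Y:=p^*\Delta+q^*a+q^*b$. Then $(Y,\Delta_Y)$ is a log canonical pair (the product of the log canonical pairs $(X,\Delta)$ and $(\mathbb P^1,a+b)$), the variety $Y$ is uniruled of dimension $n$, and
$$K_Y+\Delta_Y\sim_\Q p^*(K_X+\Delta)+q^*\big(K_{\mathbb P^1}+a+b\big)\sim_\Q p^*(K_X+\Delta),$$
which is pseudoeffective as $p$ is surjective. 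By hypothesis $(Y,\Delta_Y)$ has a good minimal model, so $\kappa_\iota(Y,K_Y+\Delta_Y)=\nu(Y,K_Y+\Delta_Y)$ by Lemma \ref{lem:exceptional}; since these invariants are preserved under pullback along the surjective morphism $p$ and agree with $\kappa$ and $\nu$ on $X$ because $\Delta$ is rational and $K_X+\Delta$ is $\Q$-Cartier, this gives $\kappa(X,K_X+\Delta)=\nu(X,K_X+\Delta)$, as wanted.

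I do not expect any real obstacle here — this is genuinely an immediate corollary of Theorem \ref{main2} — and the only points requiring (routine) care are that a product of log canonical pairs is log canonical and that $X\times\mathbb P^1$ is uniruled, together with the bookkeeping of the reduction through Theorem \ref{main2} and Lemmas \ref{lem:ratreal} and \ref{ind}. Alternatively, one could argue directly by induction on the dimension: given a log canonical pair $(X,\Delta)$ of dimension $k\le n-1$ with $K_X+\Delta$ pseudoeffective, form $Y:=X\times(\mathbb P^1)^{n-k}$ with $\Delta_Y:=p^*\Delta+\sum_{j=1}^{n-k}(q_j^*a_j+q_j^*b_j)$ for distinct $a_j,b_j$ in the $j$-th factor, obtaining a uniruled log canonical pair of dimension exactly $n$ with $K_Y+\Delta_Y\sim_\Q p^*(K_X+\Delta)$; deduce $\kappa_\iota(X,K_X+\Delta)=\nu(X,K_X+\Delta)$ as above, and conclude by Lemma \ref{HasHu} using the inductive hypothesis in dimension $k-1$. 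The content of the two arguments is the same.
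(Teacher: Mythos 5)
Your argument is correct and follows essentially the same route as the paper: reduce via Theorem \ref{main2} together with Lemmas \ref{lem:ratreal} and \ref{ind} to klt pairs with rational boundaries in dimension $n-1$, then apply the hypothesis to the uniruled $n$-dimensional pair $\big(X\times\mathbb P^1,\, p^*\Delta+q^*a+q^*b\big)$ and conclude with \cite[Theorem 4.3]{GL13}. The only cosmetic differences are that the paper first passes to a log resolution so that the product pair is visibly log smooth (rather than invoking that a product of log canonical pairs is log canonical), and that it applies \cite[Theorem 4.3]{GL13} directly without your intermediate MMP making $K_X+\Delta$ nef.
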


\begin{proof}
By Theorem \ref{main2} and Lemma \ref{ind} we only need to prove the existence of good minimal models for klt pairs with rational boundaries in dimension $n-1$. By assumption and by Corollary \ref{equi}, we may assume the Nonvanishing conjecture for log canonical pairs in dimension $n-1$.

Let $(X, \Delta)$ be a klt pair of dimension $n-1$ such that $K_X+\Delta$ is pseudoeffective and $\Delta$ is a $\Q$-divisor. By passing to a small $\mathbb Q$-factorialization \cite[Theorem 10.5]{Fuj11}, we may assume that $X$ is $\mathbb Q$-factorial. By \cite[Theorem 4.3]{GL13} it suffices to show that $\kappa(X,K_X+\Delta)=\nu(X,K_X+\Delta)$. By passing to a log resolution, we may assume $(X, \Delta)$ is log smooth. 

Consider $Y:=X\times\mathbb{P}^1$. Then $\mathcal O_Y(K_Y)\simeq p^*\mathcal O_X(K_X)\otimes q^*\mathcal{O}_{\mathbb{P}^1}(-2)$, where $p$ and $q$ are the corresponding projections. Pick two different points $a$ and $b$ on $\mathbb{P}^1$. Then $K_Y+p^*\Delta+q^*a+q^*b\sim_{\Q} p^*(K_X+\Delta)$ is pseudoeffective since $K_X+\Delta$ is pseudoeffective. Since $p$ is a smooth morphism, $(Y, p^*\Delta+q^*a+q^*b)$ is log smooth and thus a dlt pair. Since $Y$ is uniruled and of dimension $n$, $(Y, p^*\Delta+q^*a+q^*b)$ has a good minimal model by assumption and hence
\begin{align*}
\kappa(X,K_X+\Delta)&=\kappa(Y, K_Y+p^*\Delta+q^*a+q^*b)\\
&=\nu(Y, K_Y+p^*\Delta+q^*a+q^*b)=\nu(X,K_X+\Delta).
\end{align*}
This finishes the proof.
\end{proof}

As announced in the introduction, we also have:

\begin{thm}\label{gm2}
Assume the existence of good minimal models for non-uni\-ru\-led projective klt pairs with rational boundaries in dimension $n-1$. 

Then the Abundance conjecture for non-uniruled projective klt pairs with rational boundaries in dimension $n$ implies the Abundance conjecture for uniruled projective log canonical pairs in dimension $n$. 
\end{thm}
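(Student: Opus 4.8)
The plan is to follow the pattern of the proofs of Theorems \ref{main} and \ref{main2}: reduce the log canonical case to the klt case by means of the $\lfloor\Delta\rfloor$-machinery, and then settle the klt case by rerunning the fibration argument of Theorem \ref{main} while keeping track of the numerical dimension. First I would record, using Lemmas \ref{lem:ratreal} and \ref{ind} together with Theorem \ref{main2} in dimensions at most $n-1$, that the standing hypothesis implies the existence of good minimal models for \emph{all} log canonical pairs in dimensions at most $n-1$; in particular the hypotheses of Lemma \ref{HasHu} and of Proposition \ref{lem:tau} are then available. It suffices to show that if $(X,\Delta)$ is a uniruled projective log canonical pair of dimension $n$ with $K_X+\Delta$ nef, then $K_X+\Delta$ is semiample, the version in terms of minimal models following because a minimal model of a uniruled log canonical pair is again a uniruled log canonical pair. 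By Lemma \ref{HasHu} this reduces to proving $\kappa_\iota(X,K_X+\Delta)=\nu(X,K_X+\Delta)$; since this equality is unchanged by passing to a log resolution, we may assume $(X,\Delta)$ is log smooth, and then $K_X$ is not pseudoeffective by \cite[Corollary 4.11]{Deb01}.

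I would then treat the klt case $\lfloor\Delta\rfloor=0$. Put $\tau:=\tau(X,0;\Delta)\in(0,1]$, so that $\tau(X,0;\tau\Delta)=1$. Steps 1--4 of the proof of Theorem \ref{main} carry over to the pair $(X,\tau\Delta)$, except that I would invoke the existence of good minimal models for log canonical pairs in dimensions at most $n-1$ in place of the Nonvanishing conjecture -- and \cite[Theorem 1.2]{Gon11} in the case $\nu(X,K_X+\tau\Delta)=0$ -- so as to obtain the equality $\kappa_\iota(X,K_X+\tau\Delta)=\nu(X,K_X+\tau\Delta)$, not merely $\kappa_\iota(X,K_X+\tau\Delta)\geq0$. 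Writing $K_X+\tau\Delta\sim_\R D$ with $D\geq0$, we have $K_X+\Delta\sim_\R D+(1-\tau)\Delta$, and \cite[Lemma 2.9]{DL15} upgrades this to $\kappa_\iota(X,K_X+\Delta)=\nu(X,K_X+\Delta)$. (Alternatively, this klt case is essentially \cite[Theorem 1.1]{DL15}, once one reduces to a rational boundary via \cite[Proposition 3.2(3)]{Bir11}.)

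For the case $\lfloor\Delta\rfloor\neq0$ I would argue as in Steps 5--8 of the proof of Theorem \ref{main} and in Proposition \ref{lem:tau}. If $K_X+\Delta-\varepsilon\lfloor\Delta\rfloor$ is not pseudoeffective for any $\varepsilon>0$, then Proposition \ref{lem:tau} gives $\kappa_\iota(X,K_X+\Delta)=\nu(X,K_X+\Delta)$ at once. Otherwise there is $0<\varepsilon<1$ with $K_X+\Delta-\varepsilon\lfloor\Delta\rfloor$ pseudoeffective; since $\big(X,\Delta-\varepsilon\lfloor\Delta\rfloor\big)$ is klt, the klt case applies to it, and then, exactly as in Step 2 of the proof of Theorem \ref{main2} -- choosing $0<\delta<\varepsilon$, applying the klt case to $\big(X,\Delta-(\varepsilon-\delta)\lfloor\Delta\rfloor\big)$, and invoking \cite[Lemma 2.9]{DL15} -- one obtains $\kappa_\iota(X,K_X+\Delta)=\nu(X,K_X+\Delta)$. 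In every case Lemma \ref{HasHu} then yields that $K_X+\Delta$ is semiample.

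The main obstacle is the klt case. Unlike in Theorem \ref{main2}, here only Abundance -- and not the existence of good minimal models -- is assumed in dimension $n$, so one cannot simply pass to a minimal model, decompose the boundary and invoke abundance; one must genuinely reproduce the fibration argument of Theorem \ref{main}, and it is exactly there that uniruledness enters, through the Mori fibre space structure reducing the pair to its lower-dimensional base, where the inductive hypothesis applies. A secondary, more routine, difficulty is the bookkeeping needed to propagate the equality $\kappa_\iota=\nu$ across small changes of the boundary coefficients via \cite[Lemma 2.9]{DL15}.
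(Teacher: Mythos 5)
Your overall skeleton (reduce to $\kappa_\iota=\nu$ via Lemma \ref{HasHu}, pass to a log resolution, treat the case $\lfloor\Delta\rfloor\neq0$ via Proposition \ref{lem:tau} and the perturbation trick of Step 2 of the proof of Theorem \ref{main2}) agrees with the paper, which simply says ``we are done as in Steps 1 and 2 of the proof of Theorem \ref{main2}, replacing \cite[Theorem A]{LT19} by \cite[Theorem C]{LT19}.'' The problem is your treatment of the klt case, which is where the two arguments diverge and where yours has a genuine gap. The fibration argument of Steps 1--4 of Theorem \ref{main} only applies to the pair $(X,\tau\Delta)$ at the pseudoeffective threshold $\tau=\tau(X,0;\Delta)$: the Mori fibre space structure comes precisely from $K_X+(1-\varepsilon)\tau\Delta$ failing to be pseudoeffective. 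When $\tau<1$ you must transfer the equality $\kappa_\iota=\nu$ from $K_X+\tau\Delta\sim_\R D$ to $K_X+\Delta\sim_\R D+(1-\tau)\Delta$, and \cite[Lemma 2.9]{DL15} does not do this: that lemma compares effective divisors with the \emph{same support}, and $\operatorname{Supp}D$ need not contain $\operatorname{Supp}\Delta$. (This is exactly why the analogous step in the proof of Theorem \ref{main2} compares $D+\delta\lfloor\Delta\rfloor$ with $D+\varepsilon\lfloor\Delta\rfloor$, which do have equal supports, and why it needs the klt case to hold for \emph{all} pseudoeffective klt pairs, not just threshold ones.) A useful sanity check: your klt argument never invokes the dimension-$n$ Abundance hypothesis at all, so if it were correct the theorem would hold assuming only good minimal models in dimension $n-1$ --- in particular it would prove Abundance for uniruled log canonical fourfolds unconditionally, which is open.

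The paper's klt case instead runs Step 1 of the proof of Theorem \ref{main2}: one first produces a \emph{minimal model} $(X',\Delta')$ of the resolved klt pair --- here \cite[Theorem C]{LT19} replaces \cite[Theorem A]{LT19}, because under the mere Abundance hypothesis in dimension $n$ one cannot invoke the existence of minimal models for smooth varieties in dimension $n$; Theorem C applies since the original $K_X+\Delta$ was nef, so its pullback plus effective exceptional divisors gives the required (NQC weak Zariski) decomposition. Only then does one decompose $K_{X'}+\Delta'=\sum r_i(K_{X'}+\Delta_i)$ into nef klt pieces with rational boundaries via \cite[Proposition 3.2(3)]{Bir11} and conclude semiampleness of each piece from \cite[Theorem 1.1]{DL15} together with the dimension-$n$ Abundance assumption for non-uniruled klt pairs. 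Your parenthetical alternative points in this direction, but it skips the essential intermediate step of producing a minimal model: both \cite[Proposition 3.2(3)]{Bir11} and \cite[Theorem 1.1]{DL15} require nefness, which is lost after the log resolution, and recovering it is precisely where \cite[Theorem C]{LT19} and the nefness hypothesis built into the Abundance conjecture are used.
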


\begin{proof}
By Theorem \ref{main2} and Lemma \ref{ind} we may assume the existence of good minimal models for log canonical pairs in dimensions at most $n-1$. 

Now, let $(X, \Delta)$ be a uniruled log canonical pair of dimension $n$ such that $K_X+\Delta$ is nef. It suffices to show that $\kappa_{\iota}(X, K_X+\Delta)=\nu(X, K_X+\Delta)$ by Lemma \ref{HasHu}. By passing to a log resolution we may assume that $(X, \Delta)$ is log smooth, but we may no longer assume that $K_X+\Delta$ is nef. We are done as in Steps 1 and 2 of the proof of Theorem \ref{main2} and the only difference is that we replace \cite[Theorem A]{LT19} by \cite[Theorem C]{LT19} in Step 1 of that proof.
\end{proof}

Finally, we note the following corollary of Theorem \ref{main2} and of \cite{Hu16,HH19}:

\begin{coro}
Assume the existence of good minimal models for non-uni\-ru\-led projective klt pairs with rational boundaries in dimension $n-1$. 

Let $(X,\Delta)$ be a projective log canonical pair of dimension $n$ having a non-trivial morphism to an abelian variety. If $K_X+\Delta$ is pseudoeffective, then $(X,\Delta)$ has a good minimal model.
\end{coro}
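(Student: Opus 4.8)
The strategy is to reduce, via a dlt blowup and a standard MMP, to the case where $K_X+\Delta$ is nef, and then to split into the non-uniruled and uniruled cases, applying Theorem \ref{main2} in the former and the structure of maps to abelian varieties in the latter. First I would pass to a $\Q$-factorial dlt model $(Y,\Delta_Y)\to(X,\Delta)$ using Theorem \ref{dlt}; since $\kappa_\iota$ and $\nu$ are preserved and the existence of a good minimal model descends along such a birational morphism, it suffices to treat $(Y,\Delta_Y)$. Composing with the morphism to the abelian variety, $(Y,\Delta_Y)$ still has a non-trivial morphism $h\colon Y\to A$ to an abelian variety, and $K_Y+\Delta_Y$ is pseudoeffective.

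The dichotomy is whether $Y$ is uniruled. If $Y$ is \emph{non-uniruled}, then in particular any resolution of $Y$ is non-uniruled, but more relevantly we are in the setting handled purely by the hypothesis on non-uniruled klt pairs together with Theorem \ref{main2}: indeed the hypothesis of the corollary is exactly the hypothesis of Theorem \ref{main2}, so Theorem \ref{main2} gives the existence of good minimal models for \emph{all} projective log canonical pairs in dimension $n$, and in particular $(X,\Delta)$ has a good minimal model. So in fact the non-trivial-morphism-to-an-abelian-variety hypothesis is only needed to handle the \emph{uniruled} case, where Theorem \ref{main2} alone does not apply. Thus the real content is: assuming good minimal models for non-uniruled klt pairs with rational boundaries in dimension $n-1$, a uniruled log canonical pair $(X,\Delta)$ of dimension $n$ with $K_X+\Delta$ pseudoeffective and admitting a non-trivial morphism to an abelian variety has a good minimal model.

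For the uniruled case I would use the theory of the Albanese / maps to abelian varieties as developed in \cite{Hu16, HH19}. After the dlt blowup, the non-trivial morphism $h\colon Y\to A$ has image $B:=h(Y)\subseteq A$ an abelian subvariety of positive dimension, and we may Stein-factor to get a fibration $Y\to B'$ onto a normal variety birational to $B$. Since $B'$ has dimension $\geq 1$ and is covered (up to the finite Stein part) by an abelian variety, the general fibre $F$ of this fibration has dimension $\leq n-1$; it is again log canonical with $(K_Y+\Delta_Y)|_F$ pseudoeffective, and by our inductive knowledge of good minimal models in dimension $\leq n-1$ (which follows from the hypothesis via Lemma \ref{lem:ratreal}, Lemma \ref{ind} and Theorem \ref{main2}), the restriction has a good minimal model, so $\kappa_\iota(F,(K_Y+\Delta_Y)|_F)=\nu(F,(K_Y+\Delta_Y)|_F)$. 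The key technical input from \cite{Hu16, HH19} is that for a pair with a fibration onto (a base finite over) an abelian variety, the equality of $\kappa_\iota$ and $\nu$ on the general fibre, combined with the semiabelian/Albanese structure of the base, propagates to the equality $\kappa_\iota(Y,K_Y+\Delta_Y)=\nu(Y,K_Y+\Delta_Y)$ on the total space — this is precisely the kind of statement those papers prove for the relative MMP over abelian varieties. Once that equality holds, Lemma \ref{HasHu} finishes: $(Y,\Delta_Y)$, hence $(X,\Delta)$, has a good minimal model.

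**Main obstacle.** The hard part is making precise the propagation step from the fibre to the total space in the uniruled case, i.e.\ invoking the correct statement from \cite{Hu16, HH19} about log canonical (or dlt) pairs mapping to an abelian variety: one must ensure the reduction to a fibration over an abelian base is legitimate (the image need not be all of $A$, and one must pass to the abelian subvariety generated by the image), that the general fibre genuinely has dimension $\leq n-1$ (which uses non-triviality of $h$), and that the cited results apply with $\R$-boundaries and only log canonical singularities rather than klt — the latter handled as elsewhere in the paper by a dlt blowup and the usual "$\R$-divisor version of the $\Q$-divisor statement" remark. If instead $Y$ turns out to be non-uniruled the whole argument collapses to a direct citation of Theorem \ref{main2}, so no abelian variety is needed there; organizing the proof so that this case split is clean is a minor bookkeeping point rather than a genuine difficulty.
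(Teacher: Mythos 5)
There is a genuine gap, and it comes from a misreading of the dimensions in the hypothesis. The corollary assumes good minimal models for non-uniruled klt pairs with rational boundaries in dimension $n-1$, while the pair $(X,\Delta)$ has dimension $n$. Applying Theorem \ref{main2} to this hypothesis therefore only yields good minimal models for log canonical pairs in dimension $n-1$ (this is exactly how the paper uses it, together with Lemma \ref{ind}), and \emph{not} for all log canonical pairs in dimension $n$. Your claim that ``the hypothesis of the corollary is exactly the hypothesis of Theorem \ref{main2}, so Theorem \ref{main2} gives the existence of good minimal models for all projective log canonical pairs in dimension $n$'' is false, and with it the assertion that the non-uniruled case is trivial and that the morphism to the abelian variety is only needed in the uniruled case. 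The entire point of the corollary is that the map to the abelian variety lets one gain a dimension; it is needed regardless of whether $X$ is uniruled, and no uniruled/non-uniruled dichotomy appears in the paper's argument.

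The paper's actual route is different from both halves of your sketch. One first runs a $(K_X+\Delta)$-MMP \emph{over} $A$ using \cite[Theorems 1.2 and 1.7]{HH19} (which is available because good minimal models in dimension $n-1$ are known by Theorem \ref{main2} and Lemma \ref{ind}), reaching a relative minimal model $(X',\Delta')$ over $A$. Then one upgrades relative nefness to absolute nefness: a $(K_{X'}+\Delta')$-negative extremal ray would be spanned by a rational curve by the Cone theorem, and since abelian varieties contain no rational curves that curve would be contracted over $A$, contradicting $\alpha$-nefness. Finally, after a dlt blowup one decomposes $K_{X'}+\Delta'$ into nef rational pieces via \cite[Proposition 3.2(3)]{Bir11}, applies \cite[Theorem 1.1]{Hu16} to each piece to get $\kappa=\nu$, and concludes semiampleness by Lemma \ref{HasHu}. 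Your uniruled-case sketch gestures at the right circle of ideas (restrict to fibres over the image in $A$, propagate $\kappa_\iota=\nu$ upward, finish with Lemma \ref{HasHu}), but it never establishes nefness of $K_X+\Delta$, leaves the crucial propagation step as an unspecified citation, and in any case cannot be combined with your incorrect treatment of the non-uniruled case to give a complete proof.
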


\begin{proof}
By Theorem \ref{main2} and Lemma \ref{ind} we may assume the existence of good minimal models for log canonical pairs in dimensions at most $n-1$. 

Let $A$ be the abelian variety as in the statement. By \cite[Theorem 1.2 and Theorem 1.7]{HH19} we can run a $(K_X+\Delta)$-MMP over $A$ which terminates with a relative minimal model $(X',\Delta')$ of $(X,\Delta)$ over $A$. Let $\alpha\colon X'\to A$ be the induced morphism. If $K_{X'}+\Delta'$ were not nef, then there would exist the contraction $c_R\colon X\to Z$ of a $(K_{X'}+\Delta')$-negative extremal ray $R$. Then by the Cone theorem \cite[Theorem 1.1]{Fuj11}, $R$ is spanned by the class of some rational curve $C$ on $X'$. Since abelian varieties contain no rational curves, the curve $C$ has to be contracted by $\alpha$, a contradiction since $K_{X'}+\Delta'$ is $\alpha$-nef. Therefore, $K_{X'}+\Delta'$ is nef.

We now show that $K_{X'}+\Delta'$ is semiample. By Theorem \ref{dlt} we may assume that $(X',\Delta')$ is a $\Q$-factorial dlt pair. By \cite[Proposition 3.2(3)]{Bir11} there exist finitely many $\Q$-divisors $\Delta_i$ and positive real numbers $r_i$ such that each pair $(X',\Delta_i)$ is log canonical, each $K_{X'}+\Delta_i$ is nef and $K_{X'}+\Delta'=\sum r_i(K_{X'}+\Delta_i)$. By \cite[Theorem 1.1]{Hu16} we then have $\kappa(X, K_{X'}+\Delta_i)=\nu(X, K_{X'}+\Delta_i)$, hence each $K_{X'}+\Delta_i$ is semiample by Lemma \ref{HasHu}, and therefore so is $K_{X'}+\Delta'$.
\end{proof}

	\bibliographystyle{amsalpha}
	\bibliography{biblio}	

\end{document}